\documentclass[12pt]{amsart}
\usepackage{graphicx} 
\usepackage{amsmath,amssymb,amsthm} 

\usepackage{tikz-cd}
\usetikzlibrary{arrows.meta,decorations.pathmorphing}
\usepackage{mathtools}
\usepackage{enumitem}
\usepackage{leftindex}
\usepackage{parskip}
\usepackage{hyperref}

\newtheorem{theorem}{Theorem}[section]
\newtheorem{prop}[theorem]{Proposition}
\newtheorem{corollary}[theorem]{Corollary}
\newtheorem{lemma}[theorem]{Lemma}
\newtheorem{example}[theorem]{Example}
\newtheorem{conjecture}[theorem]{Conjecture}
\newtheorem{remark}[theorem]{Remark}

\newcommand{\Tor}{\text{Tor}}
\newcommand{\ov}[2]{\overline{#1}/\overline{#2}}
\DeclareMathOperator{\HF}{HF}
\DeclareMathOperator{\HS}{H}

\title[ACI's generated by powers of general linear forms]{Betti numbers of ideals generated by $n+1$ powers of general linear forms}
\author{Eric Dannetun}
\address{Eric Dannetun, Department of Mathematics, Stockholm University, 106 91 Stockholm,
Sweden}
\email{eric.dannetun@math.su.se}

\begin{document}
\begin{abstract}
    We study ideals generated by $n+1$ powers of general linear forms in $R= k[x_1,\dots,x_n]$. By generalizing the ideas in a recent paper of Diethorn et al., we determine the Betti numbers of such ideals when at least one generator is a square. It follows that all such ideals are level. As a consequence, we show that a generic ideal in $R$ generated by $n+1$ forms, with at least one quadric generator, is level. We also determine the Betti numbers of the Artinian Gorenstein algebras linked to these almost complete intersections. By describing the dual generators of these algebras, we obtain a family of forms, including the elementary symmetric polynomials, whose annihilator ideals have the strong Lefschetz property. Finally, we give explicit generators for the annihilator ideal of any elementary symmetric polynomial.
\end{abstract}
\maketitle
\tableofcontents

\section{Introduction}
Let $R={k}[x_1,\dots,x_n]$, with $k$ of characteristic 0, and consider the ideal $I = (\ell_1^{d_1},\dots,\ell_{n+1}^{d_{n+1}})$, where the $\ell_i$ are general linear forms. After a linear change of variables $I$ can be assumed to be of the form  $I=(x_1^{d_1},\dots,x_n^{d_n},\ell^{d_{n+1}})$  where $\ell=x_1+\dots +x_n$. The study of ideals generated by general linear forms, not necessarily almost complete intersections, has a a rich history. We list some notable contributions: \cite{Fro-Holl}, \cite{EI-fatpoints}, \cite{SS-linear-forms-3var}, \cite{fat-point-scheme}, \cite{Nage-Trok-interpolation}, and \cite{BL-classification}. Moreover, the case of almost complete intersections has recently received much attention from many authors from different perspectives, see for example \cite{booth2025weaklefschetzpropertyideals}, \cite{2024grobnerbasesresolutionslefschetz}, \cite{diethorn2025studyquadraticcompleteintersection} and \cite{2025grobnerbasispowersgeneral}.

In this paper we determine the Betti numbers for any ideal generated by $n+1$ powers of general linear forms where at least one generator is a square. This generalizes recent work by Diethorn et al. \cite{diethorn2025studyquadraticcompleteintersection} who study the special case of the ideal $(x_1^2,\dots,x_n^2,\ell^2)$, and our approach is heavily inspired by this work.
 As we will discuss later there are some situations where the Betti numbers were already understood. We determine the Betti numbers for the remaining cases by showing that these can be written as a sum of two Betti numbers coming from such understood ideals.

  As a consequence, we prove that a generic ideal generated by $n+1$ forms, where at least one of the generators is a quadric, is level. 

  Let $J = (x_1^{d_1},\dots,x_{n-1}^{d_{n-1}},x_n^{d_n})$. Associated to our ideal $I=J+(\ell^{d_n+1})$ we also have an Artinian Gorenstein algebra $R/G$, where $G=J\colon (\ell^{d_{n+1}})$. The rest of the paper is mainly devoted to this algebra. In a similar fashion to $R/I$ we determine the Betti numbers of $R/G$ when one of the generators of $J$ is a square. We show that $R/G$, for arbitrary $d_1,\dots,d_{n+1}$, satisfy the strong Lefschetz property and moreover use this together with a description of the Macaulay dual generators to obtain a new family of forms which are dual generators of  Artinian Gorenstein algebras with the strong Lefschetz property. Lastly we find a set of generators of $G$ when $d_1=\dots=d_n =2$, extending the work of \cite{diethorn2025studyquadraticcompleteintersection}.

\subsection{Background and motivation}

Ideals generated by powers of linear forms appear naturally in the study of Lefschetz properties, and by the Macaulay inverse system they are linked to fat point ideals and hence to questions of polynomial interpolation. Notably, by work of R. Stanley and independently by J. Watanabe (see Theorem \ref{thm: monom-SLP}), the Hilbert series of $I=(x_1^{d_1},\dots,x_n^{d_n},\ell^{d_{n+1}})$ is minimal among all ideals generated by $n+1$ forms of degrees $d_1,\dots,d_{n+1}$. Moreover, it is precisely this that constitutes the known proof of Fröberg's conjecture (see \cite{Frobergs-formodan}) for ideals generated by $n+1$ forms.

A natural continuation of this work, both for generic ideals and ideals generated by powers of general linear forms with $n+1$ generators, is to determine the minimal free resolution, i.e. the Betti numbers. The study of Betti numbers for generic ideals was initiated in  \cite{res-n+1-general-forms} where, among other results, the Betti numbers, for the case of 4 generators, are determined when $n=3$. The work was later improved in \cite{relatively-compressed} and more results are given in \cite{ubiquity}, \cite{Pardue-Richert} and \cite{Diem}. Roughly these results say that the minimal resolution of a generic ideal (where Fröberg's conjecture is assume to hold), will look like the corresponding Koszul complex in all but the two highest degrees of each free module. In some special cases it will coincide with the Koszul complex in all but the highest degree, and in this case the last Betti numbers are forced by the Hilbert series, and thus, only in these special cases, are the Betti numbers understood fully.  For $n+1$ forms, this argument also holds for ideals generated by powers of general linear forms.

 In light of this, and as they are the main examples of ideals with $n+1$ generators known to have minimal Hilbert series, it is also natural to approach the Betti numbers of generic ideals by studying powers of general linear forms. 
 
\section{Preliminaries}
Let $R=\bigoplus_{i\ge 0} R_i$ be a Noetherian standard graded ring with $R_0=k$ a field of characteristic zero. We will generally consider $R$ as the polynomial ring in $n$ variables, $k[x_1,\dots,x_n]$, or as a graded quotient of this ring.  The unique graded maximal ideal of $R$ will be denoted $\mathfrak{m} = \bigoplus_{i>0}R_i$. We begin by recalling some standard definitions and notation, as well as some relevant results. Much of the standard definitions and notation can be found in \cite{Peeva-Graded-Syz}.

The Hilbert function, $\HF_M\colon \mathbb{N}\to \mathbb{N}$, of a finitely generated graded $R$-module $M$, is defined as
$$
\HF_M(i) := \dim_k (M_i),
$$
and the Hilbert series, $\HS_M(T)\in \mathbb{Z}[[T]]$, as
$$
\HS_M(T) := \sum_{i=0}^\infty \HF_M(i)T^i.
$$
If $M$ has Krull dimension $d$ we can write $\HS_M(T) = \frac{h_m(T)}{(1-T)^d}$ and the multiplicity of $M$, denoted $e(M)$, is defined as $h_M(1)$.

A graded free resolution 
$$
F_\bullet \colon \quad F_n \xrightarrow{\phi_n} F_{n-1} \xrightarrow{\phi_{n-1}} \dots  \xrightarrow{\phi_3} F_2 \xrightarrow{\phi_{2}} F_1 \xrightarrow{\phi_1} F_0 \
$$
of $M$ is said to be minimal if $\phi_i(F_{i+1}) \subseteq \mathfrak{m}F_i$. A minimal resolution is unique up to a graded isomorphism, and writing $F_i = \bigoplus_{j}R(-j)^{\beta_{i,j}^R(M)}$ we call $\beta_{i,j}^R(M)$ the Betti numbers of $M$ (over $R$). It follows that
$$
\beta_{i,j}^R(M) = \dim_k \Tor_{i}^R(M,k)_j.
$$ The Betti numbers are often displayed in a table called a Betti table, in which the $(i,j)$:th entry is $\beta_{i,i+j}(M)$. See the figure below.

\begin{figure}[h]
$$
 \begin{matrix}
        & 0 & 1 & \dots & i & \dots & n \\
       0 :& \beta_{0,0} & \beta_{1,1} & \dots &\beta_{i,i}  & \dots  &   \beta_{n,n}\\
       1 :& \beta_{0,1} & \beta_{1,2} & \dots &   \beta_{i,i+1}  &  \dots&   \beta_{n,n+1}\\
       \vdots & \vdots & \vdots & \ddots &\ddots &\ddots & \vdots\\
       j:& \beta_{0,j} & \beta_{1,j+1} & \dots &   \beta_{i,i+j}  &  \dots&   \beta_{n,n+j}\\
         \vdots & \vdots & \vdots & \ddots & \ddots& \ddots& \vdots\\
         s: & \beta_{0,s} & \beta_{1,s+1} & \dots &   \beta_{i,i+s}  &  \dots&   \beta_{n,n+s}\\
       \end{matrix}.
$$

\end{figure}

For a graded free complex $C_\bullet$, which is not necessarily exact, we have $C_i = \bigoplus_j R(-j)^{\beta_{i,j}}$ and we also call $\beta_{i,j}$ the Betti numbers of $C_\bullet$.

For a sequence of elements $I = (f_1,\dots,f_r)$ we define the Koszul complex $K_\bullet(I)$ of this sequence by $(K_\bullet(I))_i = \bigwedge^i(R^r)$ with differential $d_i\colon K_i(I)\to K_{i-1}(I)$ given by
$$
d_i(e_{a_{1}}\wedge\dots \wedge e_{a_i}) = \sum_{j=1}^i(-1)^{j+1} f_{a_j} (e_{a_1}\wedge \dots \wedge\widehat{e_{a_j}} \wedge \dots \wedge e_{a_i})
$$ for any $a_1 < \dots < a_i \le r$, where $e_j$ denotes the $j$:th standard-basis element of $R^r$.

For any graded $R$-module $M$ and natural number $D$ we let $M_{(D)}$ denote the submodule generated by the elements of degree $D$ and lower. For a chain complex $(C_\bullet,d)$ satisfying $d(C_r) \subseteq \mathfrak{m}C_{r-1}$  we denote $C_\bullet^{(D)}$ the complex with $(C_\bullet^{(D)})_i = (C_{i})_{(i+D)}$ with the same (restricted) differential $d$ as $C_\bullet$. Note that the condition $d(C_r) \subseteq \mathfrak{m}C_{r-1}$ is sufficient for $C_\bullet^{(D)}$ to be a well defined chain complex. 

\begin{remark}
    For two chain complexes of free $R$-modules $C_\bullet$ and $G_\bullet$, the statement $C_\bullet^{(D)} \cong G_\bullet^{(D)}$ implies that the Betti tables of $C_\bullet $ and $G_\bullet$ are equal in rows $0$ to $D$.
\end{remark}

Related to these defintions we have the following useful lemma.
\begin{lemma}[Lemma 3.4 in \cite{Pardue-Richert}]\label{lemma: pr-truncated-resolution}
Let $\phi\colon M\to N$ be a module homomorphism inducing an isomorphism $M_{(\tau)}\cong N_{(\tau)}.$ If $F_\bullet \to M $ and $G_\bullet \to N$ are minimal free resolutions of $M$ and $N$, then any lift of $\phi$ to a morphism $F_\bullet \to G_\bullet$ restricts to an isomorphism $F_\bullet^{(\tau)} \cong G_\bullet^{(\tau)}$.
\end{lemma}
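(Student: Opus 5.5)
The plan is to argue by induction on the homological degree $i$, and to apply at each step the same statement to the syzygy modules with $\tau$ replaced by $\tau+1$. Write $F_\bullet \to M$ and $G_\bullet \to N$ for the minimal resolutions, and $\tilde\phi_\bullet$ for the chosen lift of $\phi$. It suffices to prove the following two claims.
\begin{enumerate}[label=(\alph*)]
    \item The map $\tilde\phi_0$ restricts to an isomorphism $(F_0)_{(\tau)} \cong (G_0)_{(\tau)}$.
    \item With $K=\ker(F_0\to M)$ and $L=\ker(G_0\to N)$, the induced map $K\to L$ (which is $\tilde\phi_0$ restricted to $K$) is an isomorphism $K_{(\tau+1)}\cong L_{(\tau+1)}$.
\end{enumerate}
Once these hold, the truncated complexes $F^{(\tau+1)}_{\bullet\ge 1}$ and $G^{(\tau+1)}_{\bullet\ge 1}$ are truncations of minimal resolutions of $K$ and $L$, shifted by one. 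The maps $\tilde\phi_{\bullet\ge1}$ form a lift of $K\to L$, so induction gives $(F_i)_{(i+\tau)}\cong (G_i)_{(i+\tau)}$ for all $i$.

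For (a), I use that the minimal generators of $F_0$ of degree $\le\tau$ map onto a minimal generating set of $M_{(\tau)}$, and likewise for $G_0$ and $N_{(\tau)}$. Since $\phi$ maps $M_{(\tau)}$ isomorphically onto $N_{(\tau)}$, these two sets of minimal generators correspond. Concretely, $\tilde\phi_0$ induces a bijection between $(F_0)_{(\tau)}\otimes k$ and $(G_0)_{(\tau)}\otimes k$, because both equal the degree $\le\tau$ part of $M\otimes k\cong N\otimes k$ in those degrees. A map of free modules that is an isomorphism modulo $\mathfrak m$ is an isomorphism by graded Nakayama. One small point to check is that $\tilde\phi_0$ does send $(F_0)_{(\tau)}$ into $(G_0)_{(\tau)}$; this holds because it is degree-preserving and its target is generated in degrees $\le \tau$ there.

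The key step, and the main obstacle, is (b): the truncation degree rises by one but the hypothesis only controls $M$ up to degree $\tau$. Minimality resolves this. Let $z\in K$ be homogeneous of degree $\le \tau+1$. Write $z$ in terms of a basis of $F_0$. Since the resolution is minimal, $K\subseteq \mathfrak m F_0$, so every coefficient lies in $\mathfrak m$. Hence any basis element of degree $\ge\tau+1$ appears with a coefficient of positive degree, which would force $\deg z\ge \tau+2$. Therefore such basis elements do not appear, and $z\in (F_0)_{(\tau)}$. It follows that
\[
K_{(\tau+1)} = \ker\bigl((F_0)_{(\tau)}\to M_{(\tau)}\bigr)_{(\tau+1)},
\]
and the analogous identity holds for $L$. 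By (a) and the hypothesis, $\tilde\phi_0$ gives an isomorphism between the two short sequences $(F_0)_{(\tau)}\to M_{(\tau)}$ and $(G_0)_{(\tau)}\to N_{(\tau)}$, compatible with $\phi$. Consequently it maps the kernels isomorphically onto each other, and in particular maps their submodules generated in degree $\le \tau+1$ isomorphically. This proves (b) and completes the induction.
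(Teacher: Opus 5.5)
The paper does not prove this lemma. It quotes it from Pardue--Richert, and notes an erratum to that paper's proofs, so there is no in-paper argument to compare yours against. Judged on its own, your proof is correct and self-contained.

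Part (a) amounts to the observation that $(F_0)_{(\tau)}\to M_{(\tau)}$ and $(G_0)_{(\tau)}\to N_{(\tau)}$ are minimal free covers. The identity $\epsilon_G\tilde\phi_0=\phi\,\epsilon_F$ then identifies $\tilde\phi_0\otimes k$ with $\phi\otimes k$ in degrees $\le\tau$.

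Part (b) is the real content, and you handle it correctly. Minimality gives $K\subseteq\mathfrak m F_0$, so elements of $K$ of degree $\le\tau+1$ involve only basis elements of degree $\le\tau$. That is exactly what allows the truncation index to rise by one. The reindexing $(F_{j+1})_{(j+(\tau+1))}=(F_{j+1})_{((j+1)+\tau)}$ then closes the induction.

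Your argument uses only the chain-map property of $\tilde\phi_\bullet$, so it does cover \emph{any} lift, as the statement demands. In (b) you use that $\phi$ is injective on all of $M_{(\tau)}$, not merely bijective in degrees $\le\tau$, and this is genuinely needed. Take $R\to R/(f)$ with $\deg f=\tau+1$: it is bijective in degrees $\le\tau$, yet $F^{(\tau)}_1=0$ while $G^{(\tau)}_1=R(-\tau-1)$.

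Two points deserve a line each in a write-up. First, in (a), state the commutativity of the augmentation square explicitly. Your phrase ``both equal the degree $\le\tau$ part of $M\otimes k$'' silently relies on it. Second, the claim ``isomorphism mod $\mathfrak m$ implies isomorphism'' for graded free modules needs Nakayama for surjectivity. Injectivity then follows from equality of graded ranks, i.e.\ of Hilbert functions.

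For comparison, a common alternative goes through $\Tor$. One reduces to the inclusion $M_{(\tau)}\subseteq M$. Since $M/M_{(\tau)}$ is generated in degrees $\ge\tau+1$, $\Tor_i(M/M_{(\tau)},k)_j=0$ for $j\le i+\tau$. The long exact sequence then makes $\tilde\phi_i\otimes k$ bijective in degrees $\le i+\tau$, and Nakayama finishes. That route is shorter but uses $\Tor$ and the identification of $\tilde\phi_i\otimes k$ with the induced map on $\Tor$. Your syzygy-by-syzygy induction avoids both and makes visible exactly where minimality and the injectivity hypothesis enter.
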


\begin{remark}
    We want to address the fact that there is a published errata \cite{Pardue-Richert-Errata} to the paper \cite{Pardue-Richert} which fixes some errors in the original proofs. This does not affect the validity of the results in the original paper.
\end{remark}

We say that a graded Artinian $k$-algebra $A$ has the strong Lefschetz property (SLP) if there exists a linear form $l \in A_1$ such that, for all integers $i,j$, the multiplication map
$$
A_i \xrightarrow{\cdot\ell^j} A_{i+j}
$$ has maximal rank, i.e. is either injective or surjective. $A$ is said to have the weak Lefschetz property (WLP) if the same multiplication map has maximal rank for $j=1$. An element for which the multiplication has maximal rank is called a weak, respectively strong, Lefschetz element.

The main result in the theory of Lefschetz properties, which also started the area, is the following. It was originally proved by Stanley \cite{Stanley}, later independently by Watanabe \cite{Watanabe}, and since then by many other authors.

\begin{theorem}\label{thm: monom-SLP}
Let $R=k[x_1,\dots,x_n]$ with $\text{char} \  k = 0$. Then any monomial complete intersection $I = (x_1^{d_1},\dots,x_n^{d_n})$ satisfies the SLP.
\end{theorem}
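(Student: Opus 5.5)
The plan is to reduce to the one-variable case by a tensor product decomposition, and then prove the resulting statement about multiplication by powers of $\ell=x_1+\dots+x_n$ using $\mathfrak{sl}_2$-representation theory. First, write
\[
A=R/(x_1^{d_1},\dots,x_n^{d_n})\cong A_1\otimes_k\cdots\otimes_k A_n,\qquad A_i=k[x_i]/(x_i^{d_i}).
\]
Each $A_i$ has Hilbert function $1,1,\dots,1$ in degrees $0,\dots,d_i-1$.

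The goal is to show that $\ell$ is a strong Lefschetz element. It suffices to show that $\ell^{s-2i}\colon A_i\to A_{s-i}$ is an isomorphism for every $i\le s/2$, where $s=\sum(d_i-1)$ is the socle degree. Indeed, the Hilbert function of $A$ is symmetric and unimodal, being a product of symmetric unimodal polynomials. Given these isomorphisms, every map $\ell^j\colon A_i\to A_{i+j}$ factors through, or is a factor of, one of them. Hence it is injective when $\dim A_i\le\dim A_{i+j}$ and surjective otherwise.

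To get the isomorphisms, equip each $A_i$ with an $\mathfrak{sl}_2$-action, using $\operatorname{char}k=0$:
\begin{itemize}
\item $E_i$ is multiplication by $x_i$;
\item $H_i$ acts on $x_i^a$ by the scalar $2a-(d_i-1)$;
\item $F_i$ is defined by $F_i(x_i^a)=a(d_i-a)\,x_i^{a-1}$.
\end{itemize}
A direct check gives $[E_i,F_i]=H_i$, $[H_i,E_i]=2E_i$ and $[H_i,F_i]=-2F_i$. So $A_i$ becomes the irreducible representation of dimension $d_i$.

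On the tensor product, the diagonal action has $E=\sum E_i$, which is exactly multiplication by $\ell$. Likewise $H=\sum H_i$ acts on $A_j$ by the scalar $2j-s$. Since $A$ is a finite-dimensional $\mathfrak{sl}_2$-representation in characteristic zero, it decomposes as a direct sum of irreducibles, and in each irreducible the weights are symmetric about $0$. The standard fact is then that $E^{m}$ maps the weight space of weight $-m$ isomorphically onto the weight space of weight $m$. Taking $m=s-2i$, this says precisely that $\ell^{s-2i}\colon A_i\to A_{s-i}$ is bijective.

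The main obstacle is justifying complete reducibility and this property of irreducible $\mathfrak{sl}_2$-modules. I would cite Weyl's theorem together with the explicit description of $V(m)$, in which $E$ moves along a weight string and is nonzero until the top weight. Alternatively, one can avoid Lie theory by giving an elementary induction on $n$. In that approach, $A_1\otimes A'$ is decomposed into strings using the Clebsch–Gordan rule, which is proved directly via Jordan types of $\ell=x_1+\ell'$ acting on a tensor product of Jordan strings.
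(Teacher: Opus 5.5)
Your argument is correct. Note that the paper gives no proof of this statement; it quotes it as the classical theorem of Stanley \cite{Stanley} and Watanabe \cite{Watanabe}, so the only comparison is with those two proofs.

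What you wrote is the representation-theoretic proof, which is Watanabe's route. You realize $A=\bigotimes_i k[x_i]/(x_i^{d_i})$ as a tensor product of irreducible $\mathfrak{sl}_2$-modules, with $E=\sum_i E_i$ equal to multiplication by $\ell$ and $H$ acting by $2i-s$ in degree $i$. The structure of the modules $V(m)$ then makes $\ell^{s-2i}$ bijective from degree $i$ to degree $s-i$. Your commutator checks are right, and $a(d_i-a)\neq 0$ for $0<a<d_i$ is exactly where characteristic $0$ enters.

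Stanley's proof is geometric instead. $\mathbb{C}[x_1,\dots,x_n]/(x_1^{d_1},\dots,x_n^{d_n})$ is, up to doubling degrees, the cohomology ring of $\mathbb{P}^{d_1-1}\times\cdots\times\mathbb{P}^{d_n-1}$, and $x_1+\cdots+x_n$ is an ample class. The hard Lefschetz theorem then gives the same bijections. One passes to an arbitrary $k$ of characteristic $0$ because the matrices of $\ell^j$ in the monomial basis have integer entries, so their ranks agree over $\mathbb{Q}$, $\mathbb{C}$ and $k$.

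Your route avoids the deep geometric input and works over any $k$ of characteristic $0$ directly. Weyl's theorem holds for $\mathfrak{sl}_2(k)$; alternatively, extend scalars, which does not change ranks. Like Stanley's proof, it identifies $\ell=x_1+\cdots+x_n$ as a strong Lefschetz element. That is exactly the form in which the paper uses the result: semi-regularity of $\ell^{d_{n+1}}$ on $R/J$, and the SLP of $R/G$ via Theorem \ref{thm: SLP-colon}.

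Two cosmetic points:
\begin{itemize}
\item You use $A_i$ both for the tensor factor and for the degree-$i$ component; rename one of them.
\item Unimodality is not needed in the reduction step. If $2i+j\le s$, then $\ell^j$ on degree $i$ is the first factor of the bijection $\ell^{s-2i}$, hence injective. If $2i+j>s$, then either the target is $0$, or, with $i'=s-i-j$, $\ell^j$ is the last factor of the bijection $\ell^{s-2i'}$ from degree $i'$ to degree $i+j$, hence surjective.
\end{itemize}
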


Similar to the Lefschetz properties we have the related notion of maximal-rank elements, or as introduced in \cite{Pardue-Richert}, semi-regular sequences.
For a graded Noetherian $k$-algebra  $A$ we say that an element $f\in A_d$ is semi-regular if the multiplication map $A_{i} \xrightarrow{\cdot f} A_{i+d}$ has maximal rank for all $i$. We say that a sequence $f_1,\dots,f_r$ is semi-regular if $f_i$ is semi-regular on $A/(f_1,\dots,f_{i-1})$ for all $1\le i \le r$.

It follows that the Hilbert series of a semi-regular sequence is equal to the series predicted by Fröberg's conjecture.
\begin{conjecture}[Fröberg's conjecture \cite{Frobergs-formodan}]
For a generic ideal $I = (f_1,\dots,f_r) \subset k[x_1,\dots,x_n]$ with $\deg f_i = d_i$ we have
$$
\HS_{R/I}(T) = \left[\frac{\prod_{i=1}^r (1-T^{d_i})}{(1-T)^n}\right].
$$
\end{conjecture}
Here $[.]$ denotes truncation of a power series at the first non-positive coefficient. Moreover the conjecture is equivalent to the statement that a generic ideal is defined by a semi-regular sequence (see \cite{Pardue-Moreno-Socias}). Our reason for choosing to state some results for semi-regular sequences instead of for generic ideals is because we want to make it clear that the statements hold for ideals generated by powers of general linear forms. That $I=(x_1^{d_1},\dots,x_n^{d_n},\ell^{d_{n+1}})$ is a semi-regular sequence follows by first noting that any regular sequence is semi-regular, so $J =(x_1^{d_1},\dots,x_n^{d_n})$ is a semi-regular sequence, and then by Theorem \ref{thm: monom-SLP}, multiplication by $\ell^{d_{n+1}}$ on $R/J$ has maximal rank, which gives the claim.

The following theorem describes the shape of the Hilbert series for an Artinian complete intersection.

\begin{theorem}[\cite{Complete-Intersections}]\label{thm:RRR}
    Let $A = k[x_1,\dots,x_n]/(x_1^{d_1},\dots,x_n^{d_n})$ where $k$ is a field and $1\le d_1\le\dots\le d_n$. Let $t=\sum_{i=1}^n (d_i-1)$.
    
    \begin{itemize}
        \item Suppose $d_n \le \frac{t+1}{2}$. If $t$ is even, then $\HF_A$ is strictly increasing in the interval $[0,t/2]$ and strictly decreasing in the interval $[t/2,t+1]$. If $t$ is odd, then $\HF_A$ is strictly increasing in the interval $[0,\frac{t-1}{2}]$, constant in the interval $[\frac{t-1}{2},\frac{t+1}{2}]$ and strictly decreasing in the interval $[\frac{t+1}{2},t+1]$.
        \item 
        Suppose $d_n > \frac{t+1}{2}$. Then $\HF_A$ is strictly increasing up to a flat top which begins at $t' = \sum_{i=1}^{n-1}(d_i-1)$ and ends at $d_n-1$; afterwards it is strictly decreasing to $0$.
    \end{itemize}

\end{theorem}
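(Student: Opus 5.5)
The Hilbert series of $A$ is $\prod_{i=1}^n (1+T+\dots+T^{d_i-1})$. I would therefore argue by induction on $n$ and peel off the factor with the largest exponent $d_n$. The base case $n=1$ is immediate: $\HF_A$ is $1$ on $[0,d_1-1]$ and $0$ afterwards, which is the second bullet with $t'=0$.

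\textbf{Setup.} Let $A'=k[x_1,\dots,x_{n-1}]/(x_1^{d_1},\dots,x_{n-1}^{d_{n-1}})$, put $a_j=\HF_{A'}(j)$ and $t'=\sum_{i<n}(d_i-1)$. Then
$$\HF_A(i)=\sum_{j=i-d_n+1}^{i}a_j ,$$
so, with $a_j=0$ for $j<0$ or $j>t'$,
$$\HF_A(i)-\HF_A(i-1)=a_i-a_{i-d_n}.$$
By the induction hypothesis, $(a_j)$ is positive exactly on $[0,t']$ and symmetric, $a_j=a_{t'-j}$. It is strictly increasing up to a flat top centred at $t'/2$ and strictly decreasing afterwards. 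The flat top is either of length at most one (first bullet) or the interval $[t'',d_{n-1}-1]$ with $t''=\sum_{i<n-1}(d_i-1)$ (second bullet). Note that $t=t'+d_n-1$, so the condition $d_n>\frac{t+1}{2}$ is equivalent to $d_n>t'$.

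\textbf{Case $d_n>t'$.} Here the sign of $a_i-a_{i-d_n}$ can be read off directly.
\begin{itemize}
\item For $i\le t'$ we have $i-d_n<0$, so the difference is $a_i>0$ and $\HF_A$ is strictly increasing.
\item For $t'<i\le d_n-1$ both terms vanish, which gives the flat top $[t',d_n-1]$.
\item For $d_n\le i\le t+1$ we have $a_i=0$ and $0\le i-d_n\le t'$, so the difference is $-a_{i-d_n}<0$ and $\HF_A$ is strictly decreasing down to $0$.
\end{itemize}

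\textbf{Case $d_n\le t'$.} Use symmetry to rewrite $a_{i-d_n}=a_{t+1-i}$, so the step is $a_i-a_{t+1-i}$. The two indices $i$ and $t+1-i$ sit on opposite sides of $(t+1)/2$. Their distances from the centre $t'/2$ of $a$ are $|i-t'/2|$ and $|t'/2+d_n-i|$.
\begin{itemize}
\item For $i<(t+1)/2$, the index $t+1-i$ is strictly farther from $t'/2$ than $i$ is, or lies outside $[0,t']$. Unimodality plus symmetry then gives $a_i\ge a_{t+1-i}$.
\item For $i>(t+1)/2$ the reverse inequality holds.
\item For $t$ odd and $i=(t+1)/2$ the two indices coincide, so the step is $0$.
\end{itemize}
This already gives the weak shape: non-decreasing up to the middle, non-increasing after, with the prescribed constant middle step when $t$ is odd.

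\textbf{Main obstacle: strictness.} Equality $a_i=a_{t+1-i}$ with $i\ne t+1-i$ can only occur when both indices lie in the flat top of $(a_j)$. So I would check that two indices at distance $|t+1-2i|\ge 1$ cannot both lie in that flat top. When $(a_j)$ has a short top (length $\le 1$, first bullet) this is automatic, except for the pair straddling the middle, which is exactly the allowed constant step when $t$ is odd. When $(a_j)$ has the long top $[t'',d_{n-1}-1]$, I would use the ordering $d_{n-1}\le d_n$. The top has width $d_{n-1}-1-t''\le d_n-1$, while the relevant index pairs are $i$ and $i+(t+1-2i)$, with separation constrained by $d_n$ through the window $[i-d_n+1,i]$. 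I expect the case analysis showing these pairs never both land in the top (outside the middle) to be the fiddly step. If it gets messy, I would instead phrase the induction with the sharper hypothesis that $a_i>a_j$ whenever $|i-t'/2|<|j-t'/2|$, except when both $i,j$ lie in the explicit flat interval, and propagate that hypothesis directly.
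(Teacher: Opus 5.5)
The paper gives no proof of this statement. It is quoted from Reid--Roberts--Roitman \cite{Complete-Intersections} and used as a black box, so there is no in-paper argument to compare against. Your induction is a legitimate self-contained route: it rests on $\HF_A(i)-\HF_A(i-1)=a_i-a_{i-d_n}$ and the symmetry rewrite $a_{i-d_n}=a_{t+1-i}$. It is correct up to the strictness step. The equivalence $d_n>\frac{t+1}{2}\iff d_n>t'$, the whole case $d_n>t'$, and the weak shape in the case $d_n\le t'$ all check out.

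The strictness step is not fiddly, but the quantity you propose to control is the wrong one. The separation $|t+1-2i|$ can equal $1$ while the flat top of $(a_j)$ is long, so a lower bound on the separation cannot keep both indices out of the top. What matters is where the pair is centred.

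Let $W$ be the length of the flat top of $(a_j)$. The top is centred at $\frac{t'}{2}$, and $W\le 1$ in the short-top case. In the long-top case $W=d_{n-1}-1-t''\le d_n-1$, which is where $d_{n-1}\le d_n$ is used. The indices $i$ and $t+1-i$ are symmetric about $\frac{t+1}{2}=\frac{t'}{2}+\frac{d_n}{2}$. So if $2i\ne t+1$, the larger of the two is at least $\frac{t'}{2}+\frac{d_n+1}{2}$. For it to lie in the top you would need $W\ge d_n+1$, which fails in both cases.

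In particular there is no exceptional pair in the short-top case. The constant step for $t$ odd comes from the single index $i=\frac{t+1}{2}$, which you already treat separately.

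Two small loose ends should also be recorded.

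\begin{enumerate}
\item Your claim that $a_i=a_{t+1-i}$ forces both indices into the flat top ignores the possibility that both values are $0$. This is excluded when $d_n\le t'$ and $1\le i\le t+1$. Both indices are then $\ge 0$, and if both were $\ge t'+1$, adding would give $d_n\ge t'+2$. So at least one lies in $[0,t']$, hence so does the one nearer to $\frac{t'}{2}$, and its value is positive.
\item Symmetry of $(a_j)$ and its positivity on $[0,t']$ are not part of the statement you induct on. Both follow at once from the Hilbert series of $A'$ being a product of palindromic polynomials with positive coefficients.
\end{enumerate}

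With these additions your argument is a complete proof.
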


A description of the Betti numbers of a semi-regular sequence of $n+1$ forms of degrees $\{d_i\}_{i=1}^{n+1}$, when $\sum_{i=1}^{n+1}(d_i-1) $ is odd, is crucial for our result and appears first in \cite{relatively-compressed}. Their result is however phrased in terms of generic ideals and the description is rather different from our presentation below. The description we give is proved in \cite{Pardue-Richert} for any semi-regular sequence assuming a certain numerical condition, and that this condition is satisfied in our context was shown recently in \cite{froberg2025newbettinumbersideals}. We include a proof sketch following \cite{froberg2025newbettinumbersideals}.

\begin{theorem} \label{thm: Ralf}
Let $I=(f_1,\dots,f_{n+1})$ be a semi-regular sequence in $R=k[x_1,\dots,x_n]$ with $\deg{f_i} = d_i$. If $\sum_{i=1}^{n+1}(d_i-1) $ is odd then the Betti table of $R/I$ coincides, in all but the last row, with that of the Koszul complex $K_\bullet(I)$, and this determines all Betti numbers of $R/I$.
\end{theorem}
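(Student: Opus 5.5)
We plan to follow the approach of \cite{Pardue-Richert}, with the numerical input verified as in \cite{froberg2025newbettinumbersideals}. Set $J=(f_1,\dots,f_n)$, a regular sequence since $I$ is semi-regular and $R/I$ is Artinian. Put $A=R/J$, with socle degree $t=\sum_{i=1}^n(d_i-1)$, and $s=\sum_{i=1}^{n+1}(d_i-1)=t+d_{n+1}-1$, which is odd by hypothesis. Write $s=2D+1$. The central claim is that $f_{n+1}$ acting on $A$ is injective in degrees $\le D-d_{n+1}+1$ and surjective from degree $D-d_{n+1}+1$ onward. Equivalently, the map $A_{i}\to A_{i+d_{n+1}}$ is injective when $i+d_{n+1}\le D+1$ and surjective when $i+d_{n+1}\ge D+1$. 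By semi-regularity the map always has maximal rank, so which of the two holds is decided purely numerically by comparing $\HF_A(i)$ with $\HF_A(i+d_{n+1})$. We check this with Theorem~\ref{thm:RRR}. Because $\HF_A$ is symmetric about $t/2$, the comparison of $\HF_A(i)$ and $\HF_A(t-i-d_{n+1}+1+\dots)$ amounts to comparing positions relative to the centre $t/2$. The point $i+d_{n+1}/2$ lies below $(s+1)/2-d_{n+1}/2+\dots$, and unimodality with the description of the flat top gives the inequality in each case. Odd $s$ guarantees there is no degree in which both sides are equal and the map is neither injective nor surjective in an essential way, so the switch happens cleanly at a single degree. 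This verification, especially in the case $d_n>\frac{t+1}{2}$ with the flat top, is the step we expect to be most delicate.

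Next we compare $R/I$ with the Koszul complex. Let $K_\bullet=K_\bullet(I)$. Its homology $H_1(K_\bullet)$ is generated by the annihilator $(J:f_{n+1})/J$, realised by syzygies. Injectivity of $f_{n+1}$ on $A$ up to degree $D-d_{n+1}+1$ shows that this annihilator has no elements in degree $\le D-d_{n+1}+1$. Hence $H_1(K_\bullet)$ lives in degrees $\ge D+2$, that is, in homological degree $1$ it is in row $\ge D+1$. By rigidity of the Koszul complex of $n+1$ elements with $H_i=0$ for $i\ge 2$ (grade $n$), the homology $H_1(K)\cong \operatorname{Ext}$-dual to $R/I$'s socle part is its only homology. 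We then apply Lemma~\ref{lemma: pr-truncated-resolution} to the map from $\operatorname{coker}$ of the second Koszul differential (the first syzygy module of $I$ built from Koszul syzygies) to the actual syzygy module of $I$. These agree up to degree $D+1$, so the minimal resolution $F_\bullet$ of $R/I$ satisfies $F_\bullet^{(D)}\cong K_\bullet^{(D)}$. Minimality of $K_\bullet$ holds since all $f_i\in\mathfrak m$. Therefore the Betti tables agree in rows $0,\dots,D$.

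It remains to show that only one further row can be nonzero, namely row $D+1$, the last row. By the surjectivity half, $(R/I)_j=0$ for $j\ge D+1$, so the socle degree of $R/I$ is at most $D$. The regularity of $R/I$ is therefore at most $D$, and all nonzero Betti numbers lie in rows $\le D+n-n=D$ in the last column. More precisely, $\beta_{i,i+j}=0$ for $j>D$, since $\operatorname{reg} R/I = $ socle degree for Artinian modules. Combining the two parts, the Betti table equals that of $K_\bullet$ except in the last row $D$. There the Koszul entries may be modified, because $K^{(D-1)}\cong F^{(D-1)}$ is all we need. Finally, the last row entries $\beta_{i,i+D}$ are determined by the Hilbert series of $R/I$, which is known by semi-regularity. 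The alternating sum $\sum_i(-1)^i\beta_{i,j}$ is fixed for each $j$, and in the last row, for each $j$, at most one homological degree $i$ with $i+D=j$ contributes unknown values. Each such $j$ corresponds to exactly one $i$, so every entry of the last row is forced.
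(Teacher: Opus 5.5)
Your route is viable and genuinely different from the paper's. The paper does not prove the homological comparison itself. It quotes Pardue--Richert and Diem, which reduce the statement to the first non-positive coefficient of $\prod_{i=1}^{n+1}(1-T^{d_i})/(1-T)^n$ being zero, i.e.\ to the existence of a degree $j$ in which $\cdot f_{n+1}\colon A_j\to A_{j+d_{n+1}}$ is bijective, and then checks this by a peak count with Theorem~\ref{thm:RRR}. You instead prove the homological part directly for $n+1$ forms:
\begin{itemize}
\item $H_1(K_\bullet(I))\cong ((J:f_{n+1})/J)(-d_{n+1})$ is the only higher Koszul homology;
\item Lemma~\ref{lemma: pr-truncated-resolution}, applied to the inclusion $B_1=\operatorname{im} d_2\subseteq Z_1=\ker d_1$, compares $K_{\ge 2}$ with $F_{\ge 2}$;
\item the socle bound kills all rows beyond $D$;
\item the Hilbert series fixes the remaining row.
\end{itemize}
This is self-contained and exploits the fact that with $n+1$ generators $H_1$ is the only obstruction. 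The paper's citation is shorter and places the result in the general semi-regular framework.

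As written, though, two steps need repair.

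\textbf{The numerical step.} It is not actually carried out: the displayed comparisons contain placeholders. The sentence about odd $s$ also has it backwards, since odd $s$ is exactly what \emph{produces} a degree with equal dimensions. No case analysis, flat top or otherwise, is needed. Since $t+d_{n+1}=2D+2$, symmetry gives $\HF_A(D+1-d_{n+1})=\HF_A(t-D-1+d_{n+1})=\HF_A(D+1)$, so maximal rank forces bijectivity in that degree. Surjectivity then propagates upward because $A$ is standard graded. Injectivity propagates downward because the socle of $A$ lies in degree $t$.

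\textbf{The truncation is off by one.} $B_1$ and $Z_1$ sit in $K_1$ without shift and are minimally resolved by $K_{\ge 2}$ and $F_{\ge 2}$. So the lemma with $\tau=D+1$ gives $(F_{i+2})_{(i+D+1)}\cong (K_{i+2})_{(i+D+1)}$, i.e.\ agreement only in rows $\le D-1$. Your claim $F_\bullet^{(D)}\cong K_\bullet^{(D)}$ is false. For $\overline{I}=(x_1^4,\dots,x_4^4,\overline{\ell}^4)$ in the paper's example, $D=7$, and row $7$ of the Betti table is $20,46,20$, while $K_\bullet(\overline{I})$ has no entries in row $7$. Similarly, the last possible row is $D$, not $D+1$.

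Use the inclusion $B_1\subseteq Z_1$, as your parenthetical suggests. The surjection $\operatorname{coker} d_2\to I$ does not satisfy the lemma's hypothesis as stated, since its kernel $H_1$ can meet the submodule generated in low degrees.

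With these corrections your last paragraph goes through verbatim: rows $\le D-1$ agree, rows $>D$ vanish, and row $D$ is forced by the Hilbert series. Indeed, $F_\bullet^{(D-1)}\cong K_\bullet^{(D-1)}$ is exactly what you use there.
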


\begin{proof}[Proof sketch]
From the results of \cite{Pardue-Richert} and \cite{Diem} it suffices to show that the first non-positive coefficient 
$$
\frac{\prod_{i=1}^{n+1}(1-t^{d_i})}{(1-t)^n}
$$
 is zero, or equivalently that there exists a degree $j$ for which $$
  R/(f_1,\dots,f_n)_{j} \xrightarrow{\cdot f_{n+1}} R/(f_1,\dots,f_n)_{j+d_{n+1}}
 $$ is bijective. By definition, multiplication by $f_{n+1}$ has maximal rank, and if the number of peaks, $s$, of $J$ is greater then $d_{n+1},$ there clearly exists such a degree. Hence we may assume $s <d_{n+1}$. We claim that if $d_{n+1}-s$ is odd, then such a bijection occurs. Since we can write $d_{n+1} = 2k + s-1$, for some integer $k$, it follows that if the peak starts at $i$, then the multiplication by $f_{n+1}$ from degree $i-k$ to $i+(s-1)+k$ is bijective. One checks, by Theorem \ref{thm:RRR}, that this occurs exactly when $\sum_{i=1}^{n+1}(d_i-1)$ is odd.
\end{proof}

 An Artinian $k$-algebra $A=R/I$ is called level if $\text{Soc}(A)$ is generated in one degree, where $\text{Soc}(A) = \{f\in A\mid \mathfrak{m}f = 0\}$. An immediate consequence of Theorem \ref{thm: Ralf}, which will be important to us later, is the following.

\begin{corollary}\label{cor: level}
    Let $I = (f_1,\dots,f_{n+1})$ be a semi regular sequence with $\deg f_i = d_i$. If $\sum_{i=1}^{n+1}(d_i-1)$ is odd. Then $R/I$ is level.
\end{corollary}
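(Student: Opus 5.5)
The plan is to read off the socle of $R/I$ from the last module of its minimal free resolution, and then use Theorem \ref{thm: Ralf} to show that this last module lives in a single degree.

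Since $I=(f_1,\dots,f_{n+1})$ is semi-regular with $n+1\ge n$ generators, $R/I$ is Artinian. (Already $(f_1,\dots,f_n)$ has Hilbert series $[\prod_{i\le n}(1-T^{d_i})/(1-T)^n]$, which is a polynomial.) For a graded Artinian quotient $A=R/I$ we have the standard identification
$$\Tor_n^R(A,k)_j \cong \Tor_n^R(k,A)_j \cong \text{Soc}(A)_{j-n}.$$
This comes from computing $\Tor$ with the Koszul resolution of $k$ over $R$: the top homology of the Koszul complex $K_\bullet(x_1,\dots,x_n)\otimes A$ is $(0:_A\mathfrak m)(-n)$. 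Hence $R/I$ is level exactly when the last free module $F_n$ of the minimal resolution is generated in a single degree, i.e.\ exactly one entry of column $n$ of the Betti table is nonzero.

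By Theorem \ref{thm: Ralf}, the Betti table of $R/I$ agrees with that of the Koszul complex $K_\bullet(I)$ in every row except the last one, say row $s$. It therefore suffices to show that column $n$ of $K_\bullet(I)$ vanishes in all rows below $s$. That column is $K_n(I)=\bigwedge^n R^{n+1}=\bigoplus_{j}R(-(D-d_j))$, where $D=\sum_i d_i$. Its nonzero entries lie in rows $D-d_j-n$, and the smallest of these is $D-d_{\max}-n=\sum_{i\ne \max}(d_i-1)$.

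The main step is therefore the inequality $s\le \sum_{i\ne\max}(d_i-1)$. Here $s+n$ is the top degree of the resolution, and since $R/I$ is Artinian, the socle degrees are governed by the last nonzero degree of $R/I$. Concretely, the nonzero entries of column $n$ lie in rows $\le \operatorname{reg}(R/I)$, which equals the top degree of $R/I$. So we need: the socle degree $\operatorname{top}(R/I)$ is at most $\sum_{i\ne\max}(d_i-1)$.

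This holds because $R/I$ is a quotient of $R/(f_i : i\ne\max)$. After reordering, $n$ of the $f_i$ (omitting one of maximal degree) form a semi-regular sequence of $n$ forms in $n$ variables. Such a sequence is a regular sequence, since its Hilbert series is the truncation of $\prod(1-T^{d_i})/(1-T)^n$, which is already a polynomial and hence gives a complete intersection. Its top degree is exactly $\sum_{i\ne\max}(d_i-1)$. Anything in the last row of the Betti table of $R/I$ in column $n$ therefore sits in row $\le \sum_{i\ne\max}(d_i-1)$.

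Combining the two facts: column $n$ has nonzero Koszul entries only in rows $\ge \sum_{i\ne\max}(d_i-1)$, while the actual resolution has nonzero entries in column $n$ only in rows $\le$ that number. Because the Betti table of $R/I$ is Koszul in all rows except row $s$, column $n$ can be nonzero only in row $s$. So the socle is concentrated in degree $s$, and $R/I$ is level.

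The delicate point is the reordering. Semi-regularity is stated for the given order, and we need the subsequence omitting a maximal-degree generator to be regular. If reordering is not available, I would instead argue directly that $\operatorname{top}(R/I)\le \min_j \sum_{i\ne j}(d_i-1)$ via the truncated Hilbert series, since $\HS_{R/I}$ is the Fröberg truncation, which is symmetric in the $d_i$.
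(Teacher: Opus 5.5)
Your proposal is correct and follows the paper's route: Theorem~\ref{thm: Ralf}, plus the observation that column $n$ of $K_\bullet(I)$ has no entries in rows below the socle degree $s$. Your bookkeeping is the more accurate one: the paper's proof writes $(K_\bullet(I))_n=R(-\sum_i d_i)$, which is really $K_{n+1}$, and only invokes $s<\sum_i d_i$, whereas the needed input is exactly your bound $s\le\sum_{i\ne j}(d_i-1)$ for every $j$, coming from $K_n(I)=\bigoplus_j R(-(D-d_j))$.

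Prove that bound with your Hilbert-series fallback rather than by reordering, because semi-regularity depends on the order. For example, $(x^3,y^5,xy^3)$ is semi-regular in $k[x,y]$ with $\sum(d_i-1)=9$, yet $(x^3,xy^3)$ is not a regular sequence. For the fallback, put $h_j=\prod_{i\ne j}(1-T^{d_i})/(1-T)^n$, a polynomial of degree $\sum_{i\ne j}(d_i-1)$ with nonnegative coefficients, and $m=\sum_{i\ne j}(d_i-1)+1$. The coefficient of $T^{m}$ in $(1-T^{d_j})h_j$ is $-[T^{m-d_j}]h_j\le 0$, so the truncated series $\HS_{R/I}$ has no terms in degrees $\ge m$, i.e.\ $s\le m-1$.
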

\begin{proof}
    By construction we have that $(K_\bullet(I))_n = R(-(\sum_{i=1}^{n+1}d_i))$ and since the socle degree $s$ of $R/I$ is always smaller than $\sum_{i=1}^{n+1}d_i$ it follows from Theorem \ref{thm: Ralf}  that $\beta^R_{n,j}(R/I) = 0$ for $j \le n+s-1$.
\end{proof}

Theorem \ref{thm: Ralf} determines the Betti numbers for $I=(x_1^{d_1},\dots,x_n^{d_n},\ell^{d_{n+1}})$ when $t =\sum_{i=1}^{n+1}(d_i-1)$ is odd, and so the remaining cases are when $\sum_{i=1}^{n+1}(d_i-1)$ is even. Our approach, inspired by \cite{diethorn2025studyquadraticcompleteintersection}, is to show that in the case where at least one $d_i$ equals 2, the Betti numbers  can be reduced to a sum of Betti numbers from a case covered in Theorem \ref{thm: Ralf}, where they are determined uniquely by the Hilbert series.

For $R/G$, where $G = (x_1^{d_1},\dots,x_n^{d_n})\colon (\ell^{d_{n+1}})$, we will determine the Betti numbers in a similar manner, and for this we need a result which determines the Betti numbers of $R/G$ when $\sum_{i=1}^n(d_i-1)$ is odd. This is given in Proposition \ref{prop: G-res-koszul}, but we first need a result from \cite{res-n+1-general-forms} which describes the Hilbert series of $R/G$.

\begin{lemma}[Lemma 2.6 in \cite{res-n+1-general-forms}]\label{lemma: colon ideal lemma}
Let $I=(f_1,\dots,f_{n+1})$ be a minimally generated semi-regular sequence with $\deg f_i = d_i$, let $J$ be the complete intersection $(f_1,\dots,f_n)$ and take $G=J\colon (f_{n+1})$. Then the following holds.
\begin{itemize}
    \item The socle degree of $R/G$ is $\sum_{i=1}^n(d_1-1)-d_{n+1}$.
    \item  For integers $j \le  \frac{\sum_{i=1}^n(d_i-1)-d_{n+1}}{2}$ we have $\HF_{R/G}(j) = \HF_{R/J}(j)$. By symmetry, this completely determines $\HS_{R/G}(T)$.
\end{itemize}
\end{lemma}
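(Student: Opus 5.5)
The plan is to work in the Artinian complete intersection $A=R/J$ and read off $R/G$ from the multiplication map by $f_{n+1}$. Put $d=d_{n+1}$ and $t=\sum_{i=1}^n(d_i-1)$. Since $G=J\colon(f_{n+1})$, multiplication by $f_{n+1}$ gives the exact sequence
$$
0\to (R/G)(-d)\xrightarrow{\cdot f_{n+1}} R/J \to R/I\to 0 .
$$
This identifies $(R/G)(-d)$ with the principal ideal $f_{n+1}A\subseteq A$. Both parts of the lemma will be deduced from this identification, together with two facts about $A$: it is Gorenstein, and its Hilbert function is symmetric and unimodal by Theorem \ref{thm:RRR}.

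For the socle degree, note that any socle element of $f_{n+1}A$ is killed by $\mathfrak m$, so it lies in $\mathrm{Soc}(A)$. Since $A$ is Gorenstein, $\mathrm{Soc}(A)$ is one-dimensional and concentrated in degree $t$. Because $I$ is minimally generated, $f_{n+1}\notin J$, so $f_{n+1}A\neq 0$. A nonzero ideal of an Artinian ring meets the socle nontrivially, so $\mathrm{Soc}(f_{n+1}A)=\mathrm{Soc}(A)=A_t$. After undoing the shift by $d$, $R/G$ has a one-dimensional socle sitting in degree $t-d$. In particular $R/G$ is Gorenstein with socle degree $t-d$, and hence its Hilbert function is symmetric about $(t-d)/2$. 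This symmetry justifies the final sentence of the lemma.

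For the equality of Hilbert functions, it suffices to show that $(0:_A f_{n+1})_j=0$ for $j\le (t-d)/2$, since $(0:_A f_{n+1})=G/J$. Semi-regularity says that $A_j\xrightarrow{\cdot f_{n+1}}A_{j+d}$ has maximal rank, so I only need $\HF_A(j)\le \HF_A(j+d)$ in this range. By Theorem \ref{thm:RRR}, $\HF_A$ is symmetric about $t/2$ and weakly increasing up to $t/2$. There are two cases.
\begin{itemize}
\item If $j+d\le t/2$, then $\HF_A(j)\le\HF_A(j+d)$ directly by monotonicity.
\item Otherwise, $\HF_A(j+d)=\HF_A(t-j-d)$ by symmetry. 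Here $j\le t-j-d$ because $j\le (t-d)/2$, and $t-j-d<t/2$ because $j+d>t/2$. Monotonicity again gives $\HF_A(j)\le \HF_A(t-j-d)$.
\end{itemize}
In both cases multiplication by $f_{n+1}$ is injective in degree $j$, so $G_j=J_j$ and $\HF_{R/G}(j)=\HF_{R/J}(j)$. The case $d>t$ needs no separate treatment: then $f_{n+1}\in J$, contradicting minimality.

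I expect the only delicate point to be the unimodality and symmetry bookkeeping in the last step, namely checking that both comparison points land on the increasing side of $\HF_A$. The socle argument is standard linkage.
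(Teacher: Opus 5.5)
Your proof is correct and complete. The paper gives no proof of this lemma; it is quoted from \cite{res-n+1-general-forms}, so there is no internal argument to compare against, and yours is the standard linkage argument.

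Each step holds:
\begin{itemize}
\item The exact sequence $0\to (R/G)(-d_{n+1})\to R/J\to R/I\to 0$ identifies $R/G$, up to shift, with the ideal $f_{n+1}A$.
\item Since $f_{n+1}\notin J$, this ideal contains the one-dimensional socle $A_t$ of $A$. Hence $R/G$ is Gorenstein of socle degree $t-d_{n+1}$. You rightly read the $d_1$ in the statement as a typo for $d_i$.
\item By maximal rank, the Hilbert function claim reduces to $\HF_A(j)\le \HF_A(j+d_{n+1})$ for $0\le j\le (t-d_{n+1})/2$. Your two-case check using symmetry and unimodality of $\HF_A$ handles this correctly.
\item Your remark that $d_{n+1}>t$ would force $f_{n+1}\in J$ disposes of the degenerate case.
\end{itemize}

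One piece of bookkeeping is missing. Theorem \ref{thm:RRR} is stated only for the monomial complete intersection $(x_1^{d_1},\dots,x_n^{d_n})$ and does not itself assert symmetry. You should add that the Hilbert series of any complete intersection of degrees $d_1,\dots,d_n$ is $\prod_{i=1}^n(1+T+\dots+T^{d_i-1})$. This series is palindromic, which gives the symmetry about $t/2$, and it lets you apply the unimodality statement of Theorem \ref{thm:RRR} to an arbitrary complete intersection $J$, not just a monomial one.
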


The following proposition can also be stated in terms of relatively compressed algebras, since we only use the fact that $R/G$ will be relatively compressed with respect to the complete intersection $J$. It then corresponds to Theorem 3.5 from \cite{relatively-compressed}. We choose to include a new proof since it becomes rather simple using Lemma \ref{lemma: pr-truncated-resolution}.

\begin{prop}\label{prop: G-res-koszul}
Let $(f_1,\dots,f_{n+1})$, with $\deg f_i = d_i$ be a minimally generated semi-regular sequence, set $s=\frac{(\sum_{i=1}^nd_i) - d_n -n}{2}$, $J= (f_1\dots f_n) $ and $G= J\colon (f_{n+1})$. If $\sum_{i=1}^{n+1}(d_i -1)$ is odd, then the Betti table of $R/G$ is equal to the Betti table from the Koszul complex $K_\bullet(J)$ in the first $s$ rows, and this determines all Betti numbers of $R/G$.
\end{prop}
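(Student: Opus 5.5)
Since $G \supseteq J$, there is a surjection $R/J \to R/G$. Lemma \ref{lemma: colon ideal lemma} gives $\HF_{R/G}(j)=\HF_{R/J}(j)$ for $j\le (\sum_{i=1}^n(d_i-1)-d_{n+1})/2$, so $G$ and $J$ agree in all degrees up to roughly the middle of the socle degree of $R/G$. I will compare resolutions via Lemma \ref{lemma: pr-truncated-resolution}, applied to the inclusion $J\hookrightarrow G$ (equivalently the surjection $R/J\to R/G$). Write $\sigma=\sum_{i=1}^n(d_i-1)-d_{n+1}$ for the socle degree of $R/G$. The parity hypothesis $\sum_{i=1}^{n+1}(d_i-1)$ odd is equivalent to $\sigma$ being even. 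Then $G_j=J_j$ for all $j\le \sigma/2$, and hence $G_{(\tau)}=J_{(\tau)}$ with $\tau=\sigma/2$. I interpret the $s$ of the statement as this half socle degree, up to the indexing convention; the first step is to check that the stated $s$ matches the range where $J$ and $G$ coincide.

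Lemma \ref{lemma: pr-truncated-resolution} then gives an isomorphism of truncated minimal resolutions $F^{(\tau)}_\bullet\cong K_\bullet(J)^{(\tau)}$ of $G$ and $J$. The Koszul complex is the minimal resolution of $J$ because $J$ is a complete intersection. After shifting homological index to pass from ideals to quotients, the Betti table of $R/G$ agrees with that of $K_\bullet(J)$ in rows $0$ through $\tau$ (up to the off-by-one bookkeeping), which is the first claim.

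For the second claim I use Gorenstein symmetry. Since $J$ is a complete intersection and $f_{n+1}$ is a nonzerodivisor modulo $G$, we have $R/G\cong (f_{n+1})+J/J$ up to shift, and $R/G$ is Artinian Gorenstein. Its minimal resolution is therefore self-dual: $\beta_{i,j}(R/G)=\beta_{n-i,\,n+\sigma-j}(R/G)$. In Betti table terms, row $r$ at column $i$ corresponds to row $\sigma-r$ at column $n-i$. Rows $0,\dots,\tau=\sigma/2$ are known from the Koszul complex, and their mirror images cover rows $\sigma/2,\dots,\sigma$. Since $\sigma$ is even, these two ranges cover all rows $0,\dots,\sigma$, so every Betti number is determined.

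The main obstacle is aligning the indices exactly. The truncation $C^{(D)}$ is indexed so that row $D$ is included, and Lemma \ref{lemma: pr-truncated-resolution} is stated for modules while I apply it to ideals and then shift to quotients. I need to verify that the rows secured by the truncation together with their Gorenstein mirrors leave no gap in the middle. This is exactly where evenness of $\sigma$, i.e. the parity hypothesis, is used: if $\sigma$ were odd, the middle row would be determined by neither half.
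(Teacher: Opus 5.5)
The overall route is the paper's: the inclusion $J\hookrightarrow G$, Lemma \ref{lemma: pr-truncated-resolution} at $\tau=\sigma/2$ (the paper's $s$), then Gorenstein symmetry. However, the index bookkeeping you postponed is exactly where your argument fails, and it does not come out in your favour.

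Lemma \ref{lemma: pr-truncated-resolution} gives $\overline F_\bullet^{(\tau)}\cong\overline K_\bullet(J)^{(\tau)}$ for the resolutions of the \emph{ideals} $G$ and $J$. An entry $\beta_{i,j}(G)$ with $j\le i+\tau$ is $\beta_{i+1,j}(R/G)$, which sits in row $j-i-1\le\tau-1$. So you only get $F_\bullet^{(\tau-1)}\cong K_\bullet(J)^{(\tau-1)}$, i.e.\ agreement in rows $0,\dots,\tau-1$.

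Row $\tau$ genuinely differs in general: it is where the new minimal generators of $G$, in degree $\tau+1$, appear. The paper's example $\overline G=(x_1^4,\dots,x_4^4)\colon(\overline\ell^{4})$ has $\sigma=8$ and $\tau=4$. Row $4$ of its Betti table is $(20,46,20)$, while $K_\bullet(J)$ has nothing in row $4$.

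The symmetry pairs row $r$ with row $\sigma-r$, so rows $0,\dots,\tau-1$ only give you rows $\tau+1,\dots,\sigma$. The self-dual middle row $\tau$ is determined by neither, so your claim that the two ranges cover all rows is false.

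The missing step is to use the Hilbert series, which is known by Lemma \ref{lemma: colon ideal lemma}. For each $j$, the coefficient of $T^j$ in $(1-T)^n\HS_{R/G}(T)$ equals $\sum_i(-1)^i\beta_{i,j}(R/G)$. Row $\tau$ contributes exactly one term, $\beta_{j-\tau,j}$, and all other terms are already known, so the middle row is forced. This is the step the paper's proof ends with.

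It is also the real role of the parity hypothesis. For $\sigma$ even there is exactly one undetermined row. For $\sigma$ odd, truncation and symmetry leave the two rows $\lfloor\sigma/2\rfloor$ and $\lfloor\sigma/2\rfloor+1$ open, and in each degree the Hilbert series only sees the difference of their contributions.

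Separately, $f_{n+1}$ is not a nonzerodivisor modulo $G$, since $R/G$ is Artinian. What holds, by definition of the colon, is that multiplication by $f_{n+1}$ embeds $R/G(-d_{n+1})$ into $R/J$.
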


\begin{proof}    
    By Lemma \ref{lemma: colon ideal lemma} we have that $\HF_{R/G}(d) = \HF_{R/J}(d)$ for $d \le s$, and since $J \subseteq G$ we must have $J_{(s)} = G_{(s)}$. Note that since $J$ is a complete intersection $R/J$ is resolved by the Koszul complex $K_\bullet(J)$. If we let $F_\bullet$ be the minimal free resolution of $R/G$ then $G$ is resolved by $\overline{F}_\bullet$, and $J$ by $\overline{K}_\bullet(J)$, where the line denotes that we exclude the last free module in the resolution. It follows by Lemma \ref{lemma: pr-truncated-resolution} that $\overline{F}_\bullet^{(s)} \cong (\overline{K}_\bullet(J))^{(s)}$ and that the diagram 
    $$
    \begin{tikzcd}
         (\overline{F}_\bullet)^s \arrow[r] & G_{(s)} \arrow[r] & R\\
         (\overline{K}_\bullet(J))^{(s)} \arrow[r] \arrow[u,"\sim" {rotate=-90, anchor=north}] & J_{(s)} \arrow[u,"id"] \arrow[r] & R \arrow[u,"1_R"]
    \end{tikzcd} 
    $$ commutes. Hence $F_\bullet^{(s-1)} \cong (K_\bullet(J))^{(s-1)}$. It follows that the Betti table of $R/I$ is equal to the Betti table of $K_\bullet(J)$ in the first $s$ rows. Since $R/G$ is Gorenstein, with socle degree $2s$, it follows by symmetry that this determines all but the middle row of the Betti table, but these values are then forced by the Hilbert series of $R/G$.
    \end{proof}

\begin{remark}
    It is worth pointing out that there are weaker versions of both Theorem \ref{thm: Ralf} and Proposition \ref{prop: G-res-koszul} which hold without the assumption on the degrees. It says that the Betti table of $R/I$ is equal to that of the Koszul complex in all but the two last rows, and similarly that the Betti numbers of $R/G$ are equal to that of $K_\bullet(J)$ in all but the two middle rows. The first statement can be found in \cite{ubiquity}, \cite{Pardue-Richert} and a generalization in \cite{Diem}. Regarding the second statement one finds the corresponding result regarding $J$-compressed algebras in Theorem 3.9 of \cite{relatively-compressed}. The proof we give of Proposition \ref{prop: G-res-koszul} is easily adapted to give an alternative proof of the latter.
\end{remark}

Essential to our approach is also the theory of liftable modules (see \cite{Lifting-module}). For $s\in R$ an $R/(s)$-module $M$ is said to be liftable to $R$ if there exists an $R$-module $L$ such that $R/(s) \otimes_R L \cong M$ and $\Tor^R_i (L,R/(s)) =0$ for all $i>0$. A module is called weakly liftable (to $R$) if it is the direct summand of some liftable module. In our case we will consider $s$ to be a non-zero divisor on $R$, and the condition $\Tor^R_i (L,R/(s)) =0$, for $i>0$, is then equivalent to $s$ being regular on $L$.

For a non-zero divisor $s \in R$ the natural surjection $R \to R/(s)$ gives a long exact sequence in Tor (\cite{Rotman-Homological} 11.71)
\begin{equation}\label{eq: long-exact-tor}
\begin{tikzcd}[sep=small]
\dots \arrow[r] &\Tor_{i-1}^{R/(s)}(M,N) \arrow[r]
& \Tor_{i}^R(M,N) \arrow[r]
\arrow[d, phantom, ""{coordinate, name=Z}]
& \Tor_{i}^{R/(s)}(M,N) \arrow[dll,
"\delta_i",
rounded corners,
to path={ -- ([xshift=2ex]\tikztostart.east)
|- (Z) [near end]\tikztonodes
-| ([xshift=-2ex]\tikztotarget.west)
-- (\tikztotarget)}] \\ &
\Tor_{i-2}^{R/(s)}(M,N) \arrow[r]
& \Tor_{i-1}^{R}(M,N) \arrow[r]
& \Tor_{i-1}^{R/(s)}(M,N) \arrow[r] & \dots,
\end{tikzcd}
\end{equation} for all $R/(s)$-modules $M,N$.

The reason for taking interest in liftability in our case is because of the following result.

\begin{prop}[Proposition 3.1 in \cite{Lifting-module}]\label{prop: liftable}
For an $R/(s)$ module $M$, the connecting map $\delta_i$ is zero for all $R/(s)$-modules $N$ and $i>0$,  if and only if $M$ is weakly liftable to $R$.
\end{prop}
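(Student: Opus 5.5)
We would prove the equivalence by identifying the connecting map $\delta_i$ in \eqref{eq: long-exact-tor} with multiplication by a single obstruction class attached to $M$. Concretely, let $F_\bullet\to M$ be a free resolution over $R$. Since $s$ is a non-zero divisor on $R$ and annihilates $M$, the complex $F_\bullet\otimes_R R/(s)$ has homology $M$ in degree $0$ (namely $M\otimes R/(s)=M$) and $\Tor_1^R(M,R/(s))\cong M$ in degree $1$, and nothing else. Splicing, we get an exact sequence of $R/(s)$-modules
$$0\to M\to \overline{F}_1/\overline{B}\to \overline{F}_0\to M\to 0,$$
where $\overline{F}=F\otimes R/(s)$ and $\overline{B}$ is the image of $\overline F_2$. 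This defines a Yoneda class $\theta_M\in \operatorname{Ext}^2_{R/(s)}(M,M)$.

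\textbf{Step 1: identify $\delta_i$ with multiplication by $\theta_M$.} We would check that, up to sign, $\delta_i$ is the map $\Tor_i^{R/(s)}(M,N)\to\Tor_{i-2}^{R/(s)}(M,N)$ given by the action of $\theta_M$ through $\operatorname{Ext}^*_{R/(s)}(M,M)$. The natural route is to derive \eqref{eq: long-exact-tor} from the change-of-rings comparison: write $\Tor^R(M,N)$ as the homology of $F_\bullet\otimes_R N=\overline F_\bullet\otimes_{R/(s)}N$. Then compare $\overline F_\bullet$ with an $R/(s)$-free resolution $P_\bullet$ of $M$ via the mapping cone of the "Eisenbud operator" chain map $P_\bullet\to P_\bullet[-2]$, which represents $\theta_M$. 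Concretely, one lifts the differential of $P$ to $R$, squares it, and divides by $s$. This is bookkeeping but straightforward.

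\textbf{Step 2: $\theta_M=0$ if and only if $M$ is weakly liftable.} This is the Auslander–Ding–Solberg criterion.
\begin{itemize}
\item If $M\oplus M'=L/sL$ with $s$ regular on $L$, then $\theta_{L/sL}=0$, because the $R$-resolution of $L/sL$ is the mapping cone of $s$ on a resolution of $L$, so the $2$-extension splits. Since $\theta$ is additive on direct sums, it follows that $\theta_M=0$.
\item Conversely, if $\theta_M=0$, the splicing above splits in the appropriate sense. The first syzygy data then let us construct an $R$-module $L$, built from $\overline F_1/\overline B\to\overline F_0$ lifted to $R$, on which $s$ is regular and with $L/sL\cong M\oplus\Omega$-type summand. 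Hence $M$ is a direct summand of a liftable module.
\end{itemize}

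\textbf{Step 3: vanishing of all $\delta_i$ forces $\theta_M=0$.} One direction is immediate from Step 1: if $\theta_M=0$ then $\delta_i=0$ for all $N$ and $i$. For the converse, a clever choice of $N$ is needed, and we expect this to be the main obstacle. We would work in the graded (or local) setting and use Matlis duality $(-)^\vee$. Taking $N=M^\vee$ gives
$$\Tor_i^{R/(s)}(M,M^\vee)^\vee\cong \operatorname{Ext}^i_{R/(s)}(M,M).$$
Under this isomorphism the dual of $\delta_{2}$ becomes the Yoneda map $\operatorname{Ext}^0(M,M)\to\operatorname{Ext}^2(M,M)$, $\varphi\mapsto\theta_M\circ\varphi$. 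Applying it to $\mathrm{id}_M$ gives $\theta_M$. So $\delta_2=0$ for this $N$ already forces $\theta_M=0$, and Step 2 concludes.

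The delicate points are checking the naturality and signs in Step 1, and verifying that the duality identifies $\delta$ with the Yoneda action rather than some twisted version of it.
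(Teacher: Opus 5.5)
The paper gives no proof of this proposition. It is quoted from Auslander--Ding--Solberg, and only the easy implication is ever used: weakly liftable $\Rightarrow$ all $\delta_i=0$, applied with $N=k$ in Theorem~\ref{thm: sum of betti}. Your proposal is therefore a self-contained reconstruction, and as an outline it is correct.

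\begin{itemize}
\item Step~1 is the standard identification of the change-of-rings connecting map with the Eisenbud operator, obtained from the Shamash-type $R$-resolution built from $P_\bullet$. Your $\theta_M$ is that operator applied to $\mathrm{id}_M$.
\item The cone argument for $L/sL$, together with additivity, gives one half of Step~2.
\item In the graded setting $M^{\vee\vee}\cong M$, so the choice $N=M^\vee$ does detect $\theta_M$. Over a non-complete local ring you would get $\widehat{M}$ instead and would need faithful flatness.
\end{itemize}

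Steps~1 and~3 carry more machinery than is needed. Put $B=\operatorname{im}(F_2\to F_1)$. Then $\Omega_RM=F_1/B$ embeds in $F_0$, so $s$ is regular on it. Hence $\overline F_{\ge 1}$ is an $R/(s)$-free resolution of $E=\overline F_1/\overline B=\Omega_RM/s\Omega_RM$, and $\Tor_i^R(M,N)\cong\Tor_{i-1}^{R/(s)}(E,N)$ for $i\ge 2$. Set $K=\ker(\overline F_0\to M)$, so that $\Tor_j^{R/(s)}(K,N)\cong\Tor_{j+1}^{R/(s)}(M,N)$. After these identifications, \eqref{eq: long-exact-tor} is the long exact $\Tor^{R/(s)}(-,N)$-sequence of $0\to M\to E\to K\to 0$, and $\delta$ is its connecting map. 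Then $\delta_2=0$ for all $N$ says exactly that this short exact sequence is pure. Since $K$ is finitely presented, it splits.

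This route avoids the sign and naturality bookkeeping and Matlis duality, and it works for finitely generated modules over any Noetherian ring. Your route has the advantage of packaging all the $\delta_i$ as the action of a single class in $\operatorname{Ext}^2_{R/(s)}(M,M)$.

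The one step you should make precise is the converse in Step~2. A Yoneda $2$-extension with zero class does not itself ``split''. What you need is that, because $\overline F_0$ is free, the connecting map $\operatorname{Ext}^1_{R/(s)}(K,M)\to\operatorname{Ext}^2_{R/(s)}(M,M)$ is an isomorphism. It sends the class of $0\to M\to E\to K\to 0$ to $\theta_M$. Hence $\theta_M=0$ if and only if that short sequence splits. In that case nothing has to be constructed: $L=\Omega_RM$ already has $s$ regular on it, and $L/sL=E\cong M\oplus K$. Since this is the heart of the statement you are proving, include these two lines rather than citing it as ``the Auslander--Ding--Solberg criterion''.
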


\section{Resolutions of $R/I$ and $R/G$}
For the rest of the paper we use the notation 
\begin{align*}
R=k[x_1,\dots,x_n], \quad  \overline{R}=k[x_1,\dots,x_{n-1}], \\\ell = x_1+\dots+x_n \quad \text{and}  \quad \overline{\ell}=x_1+\dots+x_{n-1},
\end{align*}
 where $k$ is a field of characteristic 0.
In this section we consider ideals generated by $n+1$ powers of general linear forms where at least one generators is a square. For notational purposes we assume that the power of $x_n$ is a square, and so we have $I = (x_1^{d_1},\dots,x_{n-1}^{d_{n-1}},x_n^2,\ell^{d_n})$ where $d_i$ for $1\le i\le n$ are allowed to be any positive integers. We will always, even if not specified, assume that $I$ is minimally generated by $(x_1^{d_1},\dots,x_{n-1}^{d_{n-1}},x_n^2,\ell^{d_n})$. If not minimally generated then $I$ would be a complete intersection and hence the minimal free resolution is already well known. Similarly we let $G = (x_1^{d_1},\dots,x_{n-1}^{d_{n-1}},x_n^2) \colon (\ell^{d_n})$, which is the Artinian Gorenstein algebra linked to $I$ by the complete intersection $J = (x_1^{d_1},\dots,x_{n-1}^{d_{n-1}},x_n^2)$.
The plan is to show that for the cases when $\sum_{i=1}^{n}(d_i-1)$ is odd we can, using liftability from $R/(x_n^2)$ to $R$, reduce the Betti numbers of $R/I$ and $R/G$ to a case where the Betti numbers are known. The following provides a sketch of the main idea.

Let $M$ be an $R/(x_n^2)$ module (for us it will be either $R/I$ or $R/G$) which is liftable to $R$. Then, by Proposition \ref{prop: liftable} we split the long exact sequence \eqref{eq: long-exact-tor} into short exact sequences of the form
$$
    0 \to  \Tor_{i-1}^{R/(x_n^2)}(M,k)_{j-2} \to \Tor_{i}^{R}(M,k)_j \to
 \Tor_{i}^{R/(x_n^2)}(M,k)_j \to 0.
$$

Since $\beta_{i,j}^R(M) = \dim_k \Tor^R_i(M,k)_j$, and similarly over $R/(x_n^2)$ we obtain that
$$
\beta_{i,j}^R(M) = \beta_{i,j}^{R/(x_n^2)}(M) + \beta_{i-1,j-2}^{R/(x_n^2)}(M).
$$

Lastly we show that $\beta_{i,j}^{R/(x_n^2)}(M)$ correspond to the Betti numbers over $R$ of another ideal, which we understand from Theorem \ref{thm: Ralf} and Proposition \ref{prop: G-res-koszul}.

Thus in order to prove our main result, Theorem \ref{thm: sum of betti}, we show that $R/I$ and $R/G$ are liftable to $R$ when  $t = \sum_{i=1}^n(d_i-1)$ is odd, where $I=(x_1^{d_1},\dots,x_{n-1}^{d_{n-1}},x_n^2,\ell^{d_n})$, and $G =(x_1^{d_1},\dots,x_{n-1}^{d_{n-1}},x_n^2)\colon (\ell^{d_n})$. This is primarily done in Lemma $\ref{lemma:regular}$.

For this we will need some propositions and lemmas to help us.
In what follows we use the notation $[T^a]f$ of some polynomial $f \in k[T]$ to denote the coefficient of $T^a$ in $f$.

\begin{lemma}\label{lem:multi}
Let $I$ be minimally generated by $ (x_1^{d_1},\dots,x_{n-1}^{d_{n-1}},x_n^2,\ell^{d_n})$, and set $t = \sum_{i=1}^{n}(d_i-1)$. If $t$ is odd, then we have
$$
e(R/(I+(x_n))) = \dim_kR/(I+(x_n)) = \left[T^{\frac{t-1}{2}}\right]\frac{\prod_{i=1}^{n}(1-T^{d_i})}{(1-T)^n}.
$$
\end{lemma}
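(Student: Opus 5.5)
The plan is to identify $R/(I+(x_n))$ with an almost complete intersection in one fewer variable, and then compute its length from the Stanley–Watanabe maximal rank property and the symmetry and unimodality in Theorem \ref{thm:RRR}. Setting $x_n=0$ kills $x_n^2$ and sends $\ell$ to $\overline{\ell}$. So
$$
R/(I+(x_n)) \cong \overline{R}/(x_1^{d_1},\dots,x_{n-1}^{d_{n-1}},\overline{\ell}^{\,d_n}) = A/(\overline{\ell}^{\,d_n}), \qquad A:=\overline{R}/(x_1^{d_1},\dots,x_{n-1}^{d_{n-1}}).
$$
This ring is Artinian, so $e = h(1)$ equals $\dim_k$. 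That gives the first equality in the statement, and it remains to compute $\dim_k A/(\overline{\ell}^{\,d_n})$.

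By Theorem \ref{thm: monom-SLP}, multiplication by $\overline{\ell}^{\,d_n}\colon A_{j-d_n}\to A_j$ has maximal rank for every $j$. Hence
$$
\HF_{A/(\overline{\ell}^{d_n})}(j)=\max\bigl(0,\ \HF_A(j)-\HF_A(j-d_n)\bigr),
$$
and summing over $j$ gives $\dim_k A/(\overline{\ell}^{\,d_n})=\sum_j \max\bigl(0,\ \HF_A(j)-\HF_A(j-d_n)\bigr)$. I would next locate the sign change of $D(j):=\HF_A(j)-\HF_A(j-d_n)$. Set $t'=\sum_{i=1}^{n-1}(d_i-1)$, the socle degree of $A$, so that $t=t'+d_n-1$. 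By Theorem \ref{thm:RRR}, $\HF_A$ is symmetric about $t'/2$ and weakly unimodal, so $\HF_A(a)\ge \HF_A(b)$ whenever $|a-t'/2|\le|b-t'/2|$. Using symmetry, write $\HF_A(j-d_n)=\HF_A(t'+d_n-j)$. Then $j$ and $t'+d_n-j$ are reflections of each other about $(t'+d_n)/2=(t+1)/2$. For $j\le (t+1)/2$ the point $j$ is at least as close to $t'/2$ as $j-d_n$ is, so $D(j)\ge 0$. For $j\ge (t+1)/2$ the reverse holds, so $D(j)\le 0$. Since $t$ is odd, $(t+1)/2$ is an integer, and the positive parts are therefore exactly the terms with $j\le (t-1)/2$; the term at $j=(t+1)/2$ is $0$ anyway. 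Thus
$$
\dim_k A/(\overline{\ell}^{\,d_n})=\sum_{j\le (t-1)/2} D(j).
$$

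Finally I would convert this to a coefficient. The Hilbert series of $A$ is $\prod_{i=1}^{n-1}(1-T^{d_i})/(1-T)^{n-1}$, so $D(j)=[T^j]\prod_{i=1}^{n}(1-T^{d_i})/(1-T)^{n-1}$. Dividing by $(1-T)$ takes partial sums of coefficients, which gives
$$
\sum_{j\le m}D(j)=[T^m]\prod_{i=1}^{n}(1-T^{d_i})/(1-T)^{n}.
$$
Taking $m=(t-1)/2$ yields the claim.

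The only delicate step is the sign analysis of $D(j)$. There I must use only weak unimodality, because flat tops in Theorem \ref{thm:RRR} can make some $D(j)$ vanish, and I must also handle the boundary cases where $j-d_n<0$. This is why I avoid arguing through the ``first non-positive coefficient'' truncation: the argument via sums of positive parts needs no strictness at all.
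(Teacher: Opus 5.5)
Your proof is correct, but it reaches the formula by a different route from the paper's.

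\textbf{The paper's route.} The paper works with the $n$-variable complete intersection $(x_1^{d_1},\dots,x_{n-1}^{d_{n-1}},\ell^{d_n})\subset R$. This has socle degree $t$, and its quotient by $x_n$ is $R/(I+(x_n))$. The paper first reduces to $d_n=\max_i d_i$ and uses minimal generation to exclude $d_n=t'+1$. Theorem \ref{thm:RRR} then puts this complete intersection in the balanced case, with peaks at $(t\pm1)/2$. The weak Lefschetz property with $x_n$ makes the Hilbert function of the hyperplane section equal to first differences up to degree $(t-1)/2$. These telescope to the peak value.

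\textbf{Your route.} You stay in $n-1$ variables and apply the strong Lefschetz property of $A$, with the standard element $\overline{\ell}$, to the power $\overline{\ell}^{\,d_n}$. You then locate the sign change of $D(j)$ by reflecting through the symmetry of $\HF_A$. Your $D(j)$ is exactly the first difference of the Hilbert function of the paper's $n$-variable complete intersection, because $\HS_A(T)(1-T^{d_n})/(1-T)$ is its Hilbert series. So your reflection argument directly proves the peak location that the paper reads off from Theorem \ref{thm:RRR}.

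\textbf{What each buys.} Your version needs only symmetry and weak unimodality of $A$. It therefore requires no reordering of the $d_i$ and no appeal to minimal generation, and the identity in fact holds without that hypothesis. It also avoids the change of coordinates needed to see that $x_n$ is a weak Lefschetz element for a complete intersection in the forms $x_1,\dots,x_{n-1},\ell$. The paper's version is shorter given its black boxes, and it identifies the answer as a peak value of that complete intersection's Hilbert function.

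\textbf{Minor correction.} The symmetry of $\HF_A$ is not part of Theorem \ref{thm:RRR} as stated. It comes from $\HS_A(T)=\prod_{i<n}(1+T+\dots+T^{d_i-1})$ being a product of palindromic polynomials, or equivalently from $A$ being Gorenstein.
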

\begin{proof}
    The first equality follows by definition of multiplicity since $R/I$ is Artinian. Without of loss of generality we can assume that $d_n = \max_{i}d_i$. Since $I$ is minimally generated we have that $d_n \le \sum_{i=1}^{n-1}(d_i-1) + 1$, and if this is an equality we have that $t$ is even and so we may assume that $d_n \le \sum_{i=1}^{n-1}(d_i-1) $. Theorem \ref{thm:RRR} then tells us that the Hilbert function of the complete intersection $J = (x_1^{d_1},\dots,x_{n-1}^{d_{n-1}},\ell^{d_n})$ has exactly two peaks, which occur at degree $\frac{t-1}{2}$ and $\frac{t+1}{2}$. By $J$ having WLP (in fact SLP) with $x_n$ as a weak Lefschetz element it then follows that 
    $$
    \HF_{R/(I+(x_n))}(i) = \HF_{R/J}(i) -\HF_{R/J}(i-1),
    $$ for $i\le \frac{t-1}{2}$ and $\HF_{R/(I+(x_n))}(i)=0$ for $i>\frac{t-1}{2}$. Hence 
    \begin{align*}  
    \dim_k(R/(I+(x_n))) = \sum_{i=0}^{\frac{t-1}{2}}\HF_{R/(I+(x_n))}(i) = \\   \HF_{R/J}\left(\frac{t-1}{2}\right) = \left[T^{\frac{t-1}{2}}\right]\frac{\prod_{i=1}^{n}(1-T^{d_i})}{(1-T)^n}.
    \end{align*}
    \end{proof}

\begin{lemma}\label{proj_set}
    Consider $d_1,\dots,d_n \in \mathbb{N}$ with $t=\sum_{i=1}^{n}(d_i-1)$ being odd. Let $X \subset \mathbb{P}^{n-1}(k)$ be the set represented by elements $(a_1,\dots,a_{n-1},1)$ where $$a_i \in \{(d_i-1)-2j \mid 0\le j\le d_i-1  \}$$ and such that $$1+\sum_{j=0}^{n-1}a_i \in \{(d_n-1)-2j \mid 0 \le j\le  d_n-1  \}.$$ Then we have 
    $$
    |X| =  \left[T^{\frac{t-1}{2}}\right]\frac{\prod_{i=1}^{n}(1-T^{d_i})}{(1-T)^n}.
    $$
\end{lemma}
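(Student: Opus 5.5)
The plan is to reduce the count of $X$ to a sum of consecutive values of the Hilbert function of a monomial complete intersection in $n-1$ variables. Then I use the symmetry of that Hilbert function to match it with the coefficient on the right-hand side. No Lefschetz-type input should be needed: this is a purely combinatorial identity.

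\textbf{Step 1 (parametrize $X$).} Since every point of $X$ has last coordinate $1$, distinct tuples $(a_1,\dots,a_{n-1})$ give distinct points of $\mathbb{P}^{n-1}(k)$. So $|X|$ is the number of admissible tuples. I write $a_i=(d_i-1)-2j_i$ with $0\le j_i\le d_i-1$, and put $S=\sum_{i=1}^{n-1}j_i$ and $t'=\sum_{i=1}^{n-1}(d_i-1)$, so that $t=t'+d_n-1$. The condition on $1+\sum a_i$ reads
$$1+t'-2S=(d_n-1)-2j_n \quad\text{for some } 0\le j_n\le d_n-1.$$
Since $t$ is odd, $t'+d_n$ is even, so this is solvable exactly when $S$ lies in the integer interval
$$\left[\tfrac{t+3}{2}-d_n,\ \tfrac{t+1}{2}\right],$$
which has length $d_n$. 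Moreover $j_n$ is then determined by $S$.

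\textbf{Step 2 (Hilbert function interpretation).} Let $A=\overline{R}/(x_1^{d_1},\dots,x_{n-1}^{d_{n-1}})$. The number of tuples $(j_1,\dots,j_{n-1})$ with $0\le j_i\le d_i-1$ and $\sum j_i=m$ is $\HF_A(m)=[T^m]\prod_{i<n}(1-T^{d_i})/(1-T)^{n-1}$. Hence
$$|X|=\sum_{m=\frac{t+3}{2}-d_n}^{\frac{t+1}{2}}\HF_A(m).$$
On the other side, multiplying by $(1-T^{d_n})/(1-T)=1+T+\dots+T^{d_n-1}$ gives
$$\left[T^{\frac{t-1}{2}}\right]\frac{\prod_{i=1}^{n}(1-T^{d_i})}{(1-T)^n}=\sum_{m=\frac{t+1}{2}-d_n}^{\frac{t-1}{2}}\HF_A(m).$$

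\textbf{Step 3 (symmetry).} $A$ is a complete intersection with socle degree $t'=t-d_n+1$, so $\HF_A(m)=\HF_A(t'-m)$ for all $m$. This also holds for negative $m$ or $m>t'$, where both sides vanish. The involution $m\mapsto t'-m$ sends the interval $[\frac{t+1}{2}-d_n,\frac{t-1}{2}]$ onto $[\frac{t+3}{2}-d_n,\frac{t+1}{2}]$, as a direct check of the endpoints shows. Hence the two sums agree, which proves the lemma.

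The only delicate point is the bookkeeping in Step 1: getting the parity and the exact endpoints of the window for $S$ right, so that it is precisely the reflection of the window in Step 2 rather than off by one. Terms with $m$ outside $[0,t']$ contribute zero on both sides, so no extra case analysis is needed.
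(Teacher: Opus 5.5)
Your proof is correct and is essentially the paper's argument. The paper encodes the admissible values of $a_i$ by the palindromic Laurent polynomials $T^{-(d_i-1)}(1+T^2+\dots+T^{2(d_i-1)})$, substitutes $Y=T^2$, and collects the window of length $d_n$ by multiplying by $1+Y+\dots+Y^{d_n-1}$, which amounts to your Steps 1 and 2. The only difference is the orientation of the parametrization: the paper effectively writes $a_i=-(d_i-1)+2j_i$, so the window for $\sum j_i$ comes out directly as $[\frac{t+1}{2}-d_n,\frac{t-1}{2}]$. Your Step 3 symmetry, which is just the reflection $j_i\mapsto d_i-1-j_i$, is then not needed.
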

\begin{proof}
    Let $m_{i} = T^{-(d_i-1)} + T^{-(d_i-3)} + \dots + T^{d_i-1} $, so that the exponents corresponds to the possible values of $a_i$. Then the generating function whose coefficients give the number of vectors $a$ summing to the corresponding exponent is given by
    $$
    g=T\prod_{i=1}^{n-1}m_{i}.
    $$

    After setting $Y=T^2$, we may rewrite each factor as 
    $$
    m_{i} = T^{-(d_i-1)}(1+Y+\dots + Y^{d_i-1}) = T^{-(d_i-1)}\frac{(1-Y^{d_i})}{(1-Y)}, 
    $$ and hence
    $$
    g = T^{-(\sum_{i=1}^{n-1}(d_i-1)-1)}\frac{\prod_{i=1}^{n-1}(1-Y^{d_i})}{(1-Y)^{n-1}}.
    $$
    We are however interested in the sum of the coefficients from the terms with exponents in $\{d_n-1 -2j\mid 0\le j \le d_n-1   \}$, which can be collected in the coefficient of $T^{d_n-1}$ by multiplying with $(1+Y+\dots +Y^{d_n-1})$. This gives
    $$
    g(1+Y+\dots +Y^{d_n-1}) = T^{-(\sum_{i=1}^{n-1}(d_i-1)-1)}\frac{\prod_{i=1}^{n}(1-Y^{d_i})}{(1-Y)^n}.
    $$

    Lastly we note that the coefficient of $T^{d_n-1}$ of the polynomial above is the same as the coefficient of $T^{\sum_{i=1}^n(d_i-1) -1} = T^{t-1}$ in $\frac{\prod_{i=1}^{n}(1-Y^{d_i})}{(1-Y)^n}$, or equivalently the coefficient of $Y^{\frac{t-1}{2}}$ in $$\frac{\prod_{i=1}^{n}(1-Y^{d_i})}{(1-Y)^n}.$$ Since each counted $a$ is distinct in $\mathbb{P}^{n-1}(k)$ (as the last coordinate is fixed) we conclude that 
    $$
    |X| = \left[Y^{\frac
    {t-1}{2}}\right]\frac{\prod_{i=1}^{n}(1-Y^{d_i})}{(1-Y)^n}.
    $$ \end{proof}

With this we are ready, as outlined at the start of this section, for the main construction. In \cite{diethorn2025studyquadraticcompleteintersection} the authors consider the ideal $P=(x_1^2-x_n^2,\dots,x_{n-1}^2-x_n^2,\ell^2-x_n^2)$, when $n$ is odd, and show that $x_n$ is regular on $R/P$. Together with $P+(x_n^2) =(x_1^2,\dots,x_{n}^2,\ell^2) $, this shows that $R/(P+(x_n^2))$ is liftable to $R$. The main property of $P$ is that its vanishing locus has cardinality equal to the multiplicity of $R/(P+(x_n))$. We wish to find some ideal $I'$ with these same properties in this more general setup. It turns out that the following construction is the ``correct" generalization.

For $1\le i < n$ we define 
$$
f_{i}= \prod_{j=0}^{d_i-1}(x_i-((d_i-1)-2j)x_n),
$$ and we also set 

$$
f_{n} =\prod_{j=0}^{d_n-1}(\ell-((d_i-1)-2j)x_n).
$$

\begin{remark}\label{remark: L-constr}
     If $d_i$ is even then we have $f_{i} =\prod_{j=1}^{d_i/2}(x_i^2-(2j-1)^2x_n^2) $, when $d_i$ is odd we have $f_{i} =x_i\prod_{j=1}^{(d_i-1)/2}(x_i^2-(2j)^2x_n^2)$, and similarly for $f_{n}$. When setting all $d_i=2$ we obtain the construction from \cite{diethorn2025studyquadraticcompleteintersection}, mentioned above.
\end{remark}

We define the ideals
$$J' = (f_{1},\dots,f_{n-1})\text{, } \ I' = (f_{1},\dots,f_{n} ) \quad \text{and} \quad G' = (f_1,\dots,f_{n-1})\colon (f_n).
$$ From Remark \ref{remark: L-constr} it is easy to see that $R/(x_n^2) \otimes_R R/I' \simeq R/(I'+(x_n^2)) = R/I$. Note that we have the short exact sequence
$$
0 \to R/G'[-d_n] \xrightarrow{\cdot f_n} R/J' \to R/I' \to 0.
$$

The following lemmas, the first of which generalize Lemma 4.5 in \cite{diethorn2025studyquadraticcompleteintersection}, justify the construction. 

\begin{lemma}\label{lemma:regular}
Let $t= \sum_{i=1}^n(d_i-1)$ be odd. Then $x_n$ is a regular element on $L=R/I'$ and on $R/G'$.
\end{lemma}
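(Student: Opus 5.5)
The plan is to use the standard multiplicity criterion for a one-dimensional graded module: a linear form that is a parameter is regular exactly when the multiplicity equals the colength. Lemma \ref{lem:multi} and Lemma \ref{proj_set} supply the two numbers that have to be compared.

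First I would identify $I'+(x_n)$. Setting $x_n=0$ in the factors of each $f_i$ gives $f_i\equiv x_i^{d_i}$ for $i<n$ and $f_n\equiv \ell^{d_n}$ modulo $x_n$. Hence
$$I'+(x_n)=(x_1^{d_1},\dots,x_{n-1}^{d_{n-1}},\ell^{d_n},x_n)=I+(x_n).$$
This ideal is $\mathfrak m$-primary, so $\dim R/I'\le 1$.

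Next I would compute the projective vanishing locus of $I'$. A point of $V(I')$ cannot have $x_n=0$, since then all $x_i=0$. So we may normalize $x_n=1$. Then $f_i=0$ forces $x_i=a_i$ with $a_i\in\{(d_i-1)-2j\}$ for $i<n$. The condition $f_n=0$ forces $\ell=\sum_{i<n}a_i+1$ to lie in $\{(d_n-1)-2j\}$. Thus $V(I')$ is exactly the finite set $X$ of Lemma \ref{proj_set}. Since $t$ is odd, $|X|=\left[T^{\frac{t-1}{2}}\right]\prod_{i=1}^n(1-T^{d_i})/(1-T)^n$, and this coefficient is positive (it is a Hilbert function value of a complete intersection in the relevant range). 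So $X\neq\emptyset$, $\dim R/I'=1$, and $x_n$ is a parameter on $L=R/I'$.

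The key step is the inequality chain. For a one-dimensional graded module $M$ and a linear parameter $y$ one has
$$e(M)=\lambda(M/yM)-\lambda(0:_M y).$$
Every point of $X$ is a minimal prime of $I'$ contributing at least $1$ to the multiplicity, so
$$|X|\le e(R/I')\le \lambda(R/(I'+(x_n)))=\lambda(R/(I+(x_n))).$$
By Lemma \ref{lem:multi} and Lemma \ref{proj_set} the two ends of this chain are equal. Hence $\lambda(0:_{L}x_n)=0$, and $x_n$ is regular on $L$. The step requiring care is justifying the multiplicity formula and the lower bound $e\ge|X|$, via the associativity formula over minimal primes. I expect this to be routine, though it is where the hypothesis that $I$ is minimally generated (implicit in Lemma \ref{lem:multi}) enters.

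For $R/G'$, I would use the short exact sequence
$$0\to R/G'[-d_n]\xrightarrow{\cdot f_n} R/J'\to R/I'\to 0,$$
which identifies $R/G'[-d_n]$ with the submodule $f_n\cdot R/J'$ of $R/J'$. It then suffices to show that $x_n$ is regular on $R/J'$. The ideal $J'$ is generated by $n-1$ forms, and $J'+(x_n)=(x_1^{d_1},\dots,x_{n-1}^{d_{n-1}},x_n)$ is $\mathfrak m$-primary. Hence $J'$ has height $n-1$ and is a complete intersection. So $R/J'$ is Cohen–Macaulay of dimension $1$, and the parameter $x_n$ is regular on it. Regularity passes to submodules, so $x_n$ is regular on $R/G'$.
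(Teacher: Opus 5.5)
Your proof is correct. For $L=R/I'$ it is the paper's argument. You identify $V(I')$ with the set $X$ of Lemma~\ref{proj_set}, sandwich $|X|\le e(R/I')\le \lambda(R/(I+(x_n)))$ using $e(L)=\lambda(L/x_nL)-\lambda(0:_L x_n)$, and close the sandwich with Lemmas~\ref{proj_set} and~\ref{lem:multi}. The paper gets the lower bound from $I'\subseteq I_X$ and $e(R/I_X)=|X|$ instead of the associativity formula, which amounts to the same thing. The passage from $R/G'$ to $R/J'$ via the injection $R/G'[-d_n]\xrightarrow{\cdot f_n}R/J'$ is also the paper's.

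The genuine difference is how you handle $R/J'$. The paper reruns the point count: it computes $V(J')=X'$ with $|X'|=\prod_{i<n}d_i$ and matches it with the colength of $(x_1^{d_1},\dots,x_{n-1}^{d_{n-1}},x_n)$. You instead observe that $f_1,\dots,f_{n-1},x_n$ are $n$ forms generating an $\mathfrak{m}$-primary ideal, hence a homogeneous system of parameters and therefore a regular sequence. Your route is shorter and needs no description of $V(J')$. It also makes clear that the counting trick is only needed for $I'$, which is not a complete intersection. The paper's route has the merit of treating both ideals by one mechanism.

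Two minor points. Your explicit check that $X\neq\emptyset$ (so $\dim R/I'=1$ and $x_n$ is a parameter on $L$) fills a step the paper leaves implicit. Also, the minimal-generation hypothesis enters only through Lemma~\ref{lem:multi}, to get the two peaks at $\frac{t\pm1}{2}$. It does not enter through the multiplicity formula or the associativity bound, which are general facts.
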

\begin{proof}
We naturally have the exact sequence
$$
0 \xrightarrow{} K  \xrightarrow{} R/I' \ [-1] \xrightarrow{\cdot x_n} R/I' \xrightarrow{} R/(I'+(x_n)) \xrightarrow{} 0
$$ and are left to prove that $K = 0$. Note that $I'+(x_n) = I+(x_n)$.

Let $X\subset \mathbb{P}^{n-1}(k)$ denote the vanishing locus of $I'$. Since each factor of the defining polynomials are linear one finds that that $X$ is exactly the projective set defined in Lemma \ref{proj_set}.

We let $I_X$ be the corresponding vanishing ideal of X. Then $I' \subseteq I_X $, and moreover $\dim R/I' = \dim R/I_X = 1$. Thus we can write $\HS_{R/I'}(T) = \frac{q(T)}{1-T}$ with $e(R/I') = q(1)$. Since $K$ and $R/(I+(x_n))$ are Artinian we have
$$
e(R/I') = \HS_{R/(I+(x_n))}(1) - \HS_K(1) =  e(R/(I+(x_n))) - e(K)
$$ and so $e(R/(I+(x_n))) \ge e(R/I')$ (with equality if and only if $K = 0$). From $I'\subseteq I_X$ we also get $e(R/I_X) \le e(R/I')$.

Now it is a standard fact that since $X$ is a finite projective set we have $e(R/I_X)=|X|$. Lemma \ref{proj_set} says that  $$|X|= \left[T^{\frac
    {t-1}{2}}\right]\frac{\prod_{i=1}^{n}(1-T^{d_i})}{(1-T)^n},
    $$ and using Lemma \ref{lem:multi} we find that $e(R/(I+x_n))=|X|$. Hence $K=0$, and we conclude that $x_n$ is regular on $L$.

For the second statement we find that the vanishing locus of  the ideal $J' =  (f_{1},\dots,f_{{n-1}})$ is the projective set $X'$ of vectors which can be represented on form $(a_1,\dots,a_{n-1},1)$, with $a_i \in \{(d_i-1)-2j) \mid 0\le j\le (d_i-1)  \}$, which has cardinality $|X'| = \prod_{i=1}^{n-1}d_i$. The Artinian complete intersection $J'+(x_n) = (x_1^{d_1},\dots,x_{n-1}^{d_{n-1}},x_n)$ has Hilbert series 
$$
\HS_{R/(J+(x_n))}(T) = \frac{ \prod_{i=1}^{n-1}(1-T^{d_i})}{(1-T)^{n-1}}
$$ and hence also multiplicity $\prod_{i=1}^{n-1}d_i$. Consider the exact sequence 
$$
0 \xrightarrow{} K'  \xrightarrow{} R/J' \ [-1] \xrightarrow{\cdot x_n} R/J' \xrightarrow{} R/(J'+(x_n)) \xrightarrow{} 0.
$$ Then $\dim R/J' =1$ so we have $e(R/J') = \HS_{R/(J'+(x_n))}(1) - \HS_{K'}(1) = e(R/(J'+(x_n))) - e(K')$. Again $J'\subset I_{X'}$ so $e(R/(J')) \ge e(R/I_X) = |X'| = e(R/(J'+(x_n)))$, and we must have $K'=0$.

 This gives that $x_n$ is regular on $R/J'$ and thus also on $R/[J':(f_{n})] = R/G'$, as it injects into $R/J'$.
\end{proof}

\begin{lemma}\label{lemma: canonical_isosny} Let $I$ be minimally generated by $(x_1^{d_1},\dots,x_{n-1}^{d_{n-1}},x_n^2,\ell^{d_n})$, let $G= (x_1^{d_1},\dots,x_{n-1}^{d_{n-1}},x_n^2)\colon (\ell^{d_n})$ and $\overline{G}  = (x_1^{d_1},\dots,x_{n-1}^{d_{n-1}})\colon (\overline{\ell}^{d_n})$. When $t= \sum_{i=1}^n (d_i-1)$ is odd, we have that
    \begin{itemize}
        \item $ G = G'+(x_n^2)$,
         \item  $ \ov{R}{G} \cong R/(G'+(x_n)) $ as graded $\overline{R}$-modules.
    \end{itemize}

\end{lemma}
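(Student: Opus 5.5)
The plan is to derive both statements from the short exact sequence
$$
0 \to R/G'[-d_n] \xrightarrow{\cdot f_n} R/J' \to R/I' \to 0
$$
by tensoring it with $R/(x_n^2)$ and with $R/(x_n)$. Lemma \ref{lemma:regular} guarantees that $x_n$ is regular on $R/I'$ when $t$ is odd, and this is what keeps the tensored sequences exact.

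First I check the reductions of the generators. In the definition of $f_n$ I read the shifts as $(d_n-1)-2j$. The multiset of constants $c_j=(d_i-1)-2j$, $0\le j\le d_i-1$, is symmetric about $0$, so $\sum_j c_j=0$. Expanding the product modulo $x_n^2$ then gives
$$
f_i\equiv x_i^{d_i}-\Big(\sum_j c_j\Big)x_i^{d_i-1}x_n\equiv x_i^{d_i}, \qquad f_n\equiv \ell^{d_n} \pmod{x_n^2}.
$$
Hence $J'+(x_n^2)=J$ and $I'+(x_n^2)=I$. Modulo $x_n$ we get $f_i\equiv x_i^{d_i}$ and $f_n\equiv\overline{\ell}^{d_n}$, so $R/(J'+(x_n))\cong \overline{R}/(x_1^{d_1},\dots,x_{n-1}^{d_{n-1}})$, and under this identification multiplication by $f_n$ becomes multiplication by $\overline{\ell}^{d_n}$.

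Next, let $s\in\{x_n,x_n^2\}$ and apply $-\otimes_R R/(s)$ to the short exact sequence. The long exact Tor sequence gives exactness on the left provided $\Tor_1^R(R/I',R/(s))=0$. This Tor group is the kernel of multiplication by $s$ on $R/I'$, and it vanishes because $x_n$, and hence $x_n^2$, is regular on $R/I'$ by Lemma \ref{lemma:regular}. So we obtain exact sequences
$$
0\to R/(G'+(s))[-d_n]\xrightarrow{\cdot f_n} R/(J'+(s)) \to R/(I'+(s))\to 0 .
$$
Injectivity of the first map says exactly that the annihilator of $f_n$ in $R/(J'+(s))$ is $(G'+(s))/(J'+(s))$, that is,
$$
(J'+(s))\colon(f_n)=G'+(s).
$$

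For $s=x_n^2$ the reductions above turn this into $J\colon(\ell^{d_n})=G'+(x_n^2)$, which is the first item. For $s=x_n$, identify $R/(x_n)$ with $\overline{R}$ via $x_n\mapsto 0$. The colon then becomes $(x_1^{d_1},\dots,x_{n-1}^{d_{n-1}})\colon(\overline{\ell}^{d_n})=\overline{G}$ modulo $x_n$. This gives $R/(G'+(x_n))\cong\overline{R}/\overline{G}$, and the isomorphism is graded and $\overline{R}$-linear, which is the second item.

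The step I expect to need the most care is justifying the left exactness, which is where the hypothesis that $t$ is odd enters through Lemma \ref{lemma:regular}. The point to be precise about is that the injectivity of $\cdot f_n$ after tensoring genuinely identifies the colon ideal, rather than only giving a containment such as $G'+(x_n^2)\subseteq G$. The cancellation $\sum_j c_j=0$ is routine but must be checked, since it is what makes $f_n$ reduce to $\ell^{d_n}$ modulo $x_n^2$ and not to $\ell^{d_n}$ plus a correction term.
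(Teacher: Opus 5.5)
Your proposal is correct and follows the paper's proof essentially step for step: tensor $0\to R/G'[-d_n]\xrightarrow{\cdot f_n}R/J'\to R/I'\to 0$ with $R/(x_n^2)$ and with $R/(x_n)$, kill $\Tor_1^R(R/I',R/(s))$ using the regularity of $x_n$ on $R/I'$ from Lemma \ref{lemma:regular}, and read off $G=G'+(x_n^2)$ and $\overline{R}/\overline{G}\cong R/(G'+(x_n))$ from the injectivity of $\cdot f_n$. Two of your checks are left implicit in the paper: the verification that $f_n\equiv \ell^{d_n}$ modulo $x_n^2$ (which the paper takes from Remark \ref{remark: L-constr}), and the argument that injectivity gives equality of colon ideals rather than only a containment.
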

\begin{proof}
Using that $x_n$ is regular on $L = R/I'$, by Lemma \ref{lemma:regular}, the proof is analogous to Lemma 4.6 in \cite{diethorn2025studyquadraticcompleteintersection}: Starting with the short exact sequence
$$
0 \to R/G'[-d_n] \xrightarrow{\cdot f_n} R/J' \to R/I' \to 0
$$ and tensoring with $R/(x_n^2)$ we obtain, after taking homology,  the exact sequence
$$
\Tor_1^R(R/I', R/(x_n^2)) \to R/(G'+(x_n^2))[-d_n] \xrightarrow{\cdot f_n} R/J \to R/I \to 0.
$$ Now in $R/(x_n^2)$ we have $f_n = \ell^{d_n}$ and moreover since $x_n$ is regular on $L=R/I'$ we have $\Tor_1^R(R/I', R/(x_n^2)) = 0$. Hence we have the short exact sequence
$$
0\to R/(G'+(x_n^2))[-d_n] \xrightarrow{\cdot \ell^{d_n}} R/J \to R/I \to 0
$$ and conclude that $G'+(x_n^2) = J\colon(\ell^{d_n}) = G$.

For the second statement we also begin with 
$$
0 \to R/G'[-d_n] \xrightarrow{\cdot f_n} R/J' \to R/I' \to 0.
$$ Tensoring with $R/(x_n)$ gives 
$$
0 \to R/(G'+(x_n))[-d_n] \xrightarrow{\cdot f_n} R/(J+(x_n)) \to R/(I+(x_n)) \to 0 
$$ since $\Tor_1^R(R/I',R/(x_n)) = 0$, as $x_n$ is regular on $R/I'$. Using that $R/(J+(x_n)) \cong \ov{R}{J}$ and $R/(I+(x_n)) \cong \ov{R}{I}$, where $\overline{J}=(x_1^{d_1},\dots,x_{n-1}^{d_{n-1}})$ and $\overline{I}=\overline{J}+\ell^{d_n}$, as $\overline{R}$-modules we find that
$$
0 \to R/(G'+(x_n))[-d_n] \xrightarrow{\cdot \ell^{d_n}} \ov{R}{J} \to \ov{R}{I} \to 0 
$$ is exact. Hence $R/(G'+(x_n)) \cong \overline{R}/({\overline{J}}\colon (\overline{\ell}^{d_n}))=\ov{R}{G}$.
\end{proof}

\begin{prop}\label{prop: colon ideals}
Let $I =(x_1^{d_1},\dots,x_{n-1}^{d_{n-1}},x_n^2,\ell^{d_n})$ be minimally generated, with $t=\sum_{i=1}^n(d_i-1)$ being odd. Then
$$
\begin{tikzcd}
    0 \arrow[r] & R/(I+(x_n))[-1] \arrow[r,"\cdot x_n"] & R/I \arrow[r] & R/(I+x_n) \arrow[r] & 0 \\
     0 \arrow[r] & R/(G+(x_n))[-1] \arrow[r,"\cdot x_n"] & R/G \arrow[r] & R/(G+x_n) \arrow[r] & 0
\end{tikzcd}
 $$
 are exact, or equivalently $I \colon (x_n) = I +(x_n)$ and $G:(x_n)  = G+ (x_n)$.
\end{prop}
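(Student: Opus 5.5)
The plan is to derive both statements from the liftability set up in Lemma \ref{lemma:regular} and Lemma \ref{lemma: canonical_isosny}. The idea is that $x_n$ is regular on the lifts $R/I'$ and $R/G'$, and regularity on a lift forces the kernel of $x_n$ on the quotient by $x_n^2$ to be as small as possible.

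\emph{Reduction.} I first reduce exactness of each row to the colon-ideal equality. For the first row, the map $R/(I+(x_n))[-1]\xrightarrow{\cdot x_n} R/I$ is well defined because $x_n(I+(x_n)) \subseteq I+(x_n^2) = I$. Exactness in the middle and on the right is automatic, since the image is $(x_nR+I)/I$, which is exactly the kernel of $R/I\to R/(I+(x_n))$. Injectivity of the left map is equivalent to $I\colon(x_n)\subseteq I+(x_n)$. The reverse inclusion holds because $x_n^2\in I$. The same reasoning applies verbatim to $G$, since $x_n^2\in J\subseteq G$.

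\emph{The ideal $I$.} By Remark \ref{remark: L-constr} we have $I=I'+(x_n^2)$. Suppose $x_nf\in I$, and write $x_nf = h + x_n^2 g$ with $h\in I'$. In $L=R/I'$ this says $x_n(f-x_ng)=0$. By Lemma \ref{lemma:regular}, $x_n$ is regular on $L$ (this is where oddness of $t$ enters), so $f-x_ng\in I'$. Hence
\[
f\in I'+(x_n)\subseteq I+(x_n).
\]

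\emph{The ideal $G$.} By the first part of Lemma \ref{lemma: canonical_isosny}, $G=G'+(x_n^2)$. By Lemma \ref{lemma:regular}, $x_n$ is regular on $R/G'$. The identical computation then gives $G\colon(x_n)\subseteq G'+(x_n)\subseteq G+(x_n)$.

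There is no real obstacle left at this stage. All the substantive input is the regularity statement of Lemma \ref{lemma:regular}, which rests on the multiplicity count, together with the identification $G=G'+(x_n^2)$. The only care needed is the grading shift $[-1]$ and checking that the maps are well defined, which I handled above.
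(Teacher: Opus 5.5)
Your proof is correct and follows the paper's argument: it combines $I = I'+(x_n^2)$ and $G = G'+(x_n^2)$ with the regularity of $x_n$ on $R/I'$ and $R/G'$ from Lemma \ref{lemma:regular} to get $I\colon(x_n)\subseteq I'+(x_n)\subseteq I+(x_n)$ (and likewise for $G$), the reverse inclusion being trivial since $x_n^2$ lies in both ideals. You also spell out the well-definedness of the maps and the decomposition $x_nf = h + x_n^2 g$, both of which the paper leaves implicit.
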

\begin{proof}
This follows directly from that $x_n$ is regular on $R/I'$ and $R/G'$ (Lemma \ref{lemma:regular}) together with $I' + x_n^2 = I$ (Remark \ref{remark: L-constr}) and $G'+x_n^2 = G$ (Lemma \ref{lemma: canonical_isosny}). Since $I+(x_n) \subset I:(x_n)$ and similarly for $G$ we only need to show the other inclusion.
If $g \in I:(x_n) $ then $gx_n \in I = I' + (x_n^2)$ and hence $g \in I' + (x_n) = I+(x_n)$. An identical argument gives the statement for $G$.
\end{proof}

The last results which is needed for the main result is the following.

\begin{prop}\label{prop: betti number change}
Let $I = (x_1^{d_1},\dots,x_{n-1}^{d_{n-1}},x_n^2,\ell^{d_n})$ be minimally generated, $G = (x_1^{d_1},\dots,x_{n-1}^{d_{n-1}},x_n^2)\colon (\ell^{d_n})$, and assume that $t= \sum_{i=1}^n(d_i-1)$ is odd. Then
\begin{align*}
    \beta_{i,j}^{R/(x_n^2)}(R/I) &=\beta_{i,j}^{\overline{R}}(\ov{R}{I}) \quad \text{and}\\
    \beta_{i,j}^{R/(x_n^2)}(R/G) &=\beta_{i,j}^{\overline{R}}(\ov{R}{G}). 
\end{align*}
\end{prop}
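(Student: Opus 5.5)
Set $S = R/(x_n^2)$ and let $M$ denote either $R/I$ or $R/G$. Both are $S$-modules, since $x_n^2 \in I$ and $x_n^2 \in G$. The idea is to compare a minimal free resolution of $M$ over $S$ with one over $\overline{R}$ by reducing modulo $x_n$. The key observation is that $S$ is flat over the subring $\overline{R}[x_n]/(x_n^2)$ in the trivial way: $S = \overline{R}\otimes_k k[x_n]/(x_n^2)$. Moreover, $M$ should be ``free'' in the $x_n$ direction, in the sense of Proposition \ref{prop: colon ideals}.

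\textbf{Step 1 (lifting the resolution).} Take a minimal graded free resolution $\overline{F}_\bullet \to \overline{M}$ over $\overline{R}$, where $\overline{M}=\overline{R}/\overline{I}$ or $\overline{R}/\overline{G}$. By Lemma \ref{lemma: canonical_isosny} and the definition of $\overline I$, we have $M/x_nM \cong \overline{M}$ as $\overline R$-modules. Extend scalars to obtain $F_\bullet = \overline{F}_\bullet\otimes_{\overline{R}} S$. Since $S$ is a free $\overline{R}$-module with basis $1, x_n$, this is a free $S$-complex, and it is exact as a complex resolving $\overline{M}\otimes_{\overline R} S$.

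\textbf{Step 2 (identifying $M$).} Show that $M \cong \overline{M}\otimes_{\overline R} S$ as graded $S$-modules. There is a natural surjection $\overline{M}\otimes_{\overline{R}} S\to M$. To construct it, choose generators: since $M/x_nM\cong\overline{M}$ and $M$ is cyclic, the map $S\to M$ induces $S/(\overline{\mathfrak a})S \to M$, where $\overline{\mathfrak a}$ is the defining ideal of $\overline M$. For this we need $\overline{\mathfrak a}\subseteq$ the defining ideal of $M$ after a suitable identification, that is, $I \supseteq \overline{I}R$ and $G\supseteq \overline{G}R$ modulo $x_n^2$. This is not literally true for the given embeddings, because $\ell\neq\overline\ell$. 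Instead I would compare Hilbert series. Proposition \ref{prop: colon ideals} gives the exact sequence $0\to M/x_nM[-1]\to M\to M/x_nM\to 0$, so $\HS_M = (1+T)\HS_{\overline M}$, which is exactly $\HS_{\overline{M}\otimes S}$. Equivalently, $\Tor_1^S(M, S/(x_n)) = 0$: over $S$, the resolution of $S/(x_n)$ is periodic, $\cdots\xrightarrow{x_n}S\xrightarrow{x_n}S$, so $\Tor_i^S(M,S/(x_n)) = (0:_M x_n)/x_nM$, which vanishes by Proposition \ref{prop: colon ideals}.

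\textbf{Step 3 (Betti numbers via base change).} Rather than building an explicit isomorphism, I would argue directly with Tor. Let $P_\bullet\to M$ be a minimal $S$-free resolution. Since $\Tor_i^S(M,S/(x_n))=0$ for $i>0$, the complex $P_\bullet\otimes_S S/(x_n)$ is a minimal free resolution of $M/x_nM$ over $S/(x_n)=\overline R$. Minimality is preserved because the differentials still land in $\mathfrak m$ times the module. Hence $\beta^{S}_{i,j}(M)=\beta^{\overline R}_{i,j}(M/x_nM)$. Combined with $M/x_nM\cong\overline M$ as graded $\overline R$-modules, this gives the result. The isomorphism is $R/(I+(x_n))\cong\overline R/\overline I$ directly for $I$, and Lemma \ref{lemma: canonical_isosny} together with $G+(x_n)=G'+(x_n)$ for $G$; the latter holds since $G=G'+(x_n^2)$.

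The main obstacle is the Tor vanishing in Step 3. This needs care because $S/(x_n)$ has infinite resolution over $S$, so all higher Tor must vanish, not just $\Tor_1$. Periodicity reduces every $\Tor_i$ to the single homology $(0:_Mx_n)/x_nM$, and this is precisely the content of Proposition \ref{prop: colon ideals}.
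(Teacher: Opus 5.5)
Your argument is correct and is essentially the paper's proof: tensor the periodic resolution of $R/(x_n)$ over $R/(x_n^2)$ with $M$, use $I\colon(x_n)=I+(x_n)$ and $G\colon(x_n)=G+(x_n)$ from Proposition \ref{prop: colon ideals} to get $\Tor_i^{R/(x_n^2)}(M,R/(x_n))=0$ for $i>0$, then reduce a minimal $R/(x_n^2)$-resolution of $M$ modulo $x_n$ and identify $M/x_nM$ with $\overline{R}/\overline{I}$ or $\overline{R}/\overline{G}$ via Lemma \ref{lemma: canonical_isosny}. Step 1 and the attempted isomorphism $M\cong\overline{M}\otimes_{\overline{R}}R/(x_n^2)$ in Step 2 are never used and can simply be dropped.
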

\begin{proof}
    Using that $I\colon(x_n) = I+(x_n)$ and similarly for $G$ (Proposition \ref{prop: colon ideals}), the proof is analogous to Proposition 4.8 in \cite{diethorn2025studyquadraticcompleteintersection}:

    Consider the following minimal graded free resolution of $R/(x_n)$ over $R/(x_n^2)$
$$
\begin{tikzcd}
    \dots \arrow[r] & R/(x_n^2)[-2] \arrow[r,"\cdot x_n"] & R/(x_n^2)[-1]\arrow[r,"\cdot x_n"] & R/(x_n^2) \arrow[r] & 0.
\end{tikzcd}
$$ Tensoring with $R/I$ gives 
$$
\begin{tikzcd}
    \dots \arrow[r] & R/I[-2] \arrow[r,"\cdot x_n"] & R/I[-1]\arrow[r,"\cdot x_n"] & R/I \arrow[r] & 0,
\end{tikzcd}
$$ which is also exact (in all but the first map) since  Proposition \ref{prop: colon ideals} says that $I:(x_n) = I+(x_n)$. Thus $\Tor_i^{R/(x_n^2)}(R/(x_n) , R/I) = 0$ for $i>0$,
and if we let $G^\bullet $ denote a minimal free graded resolution of $R/I$ over $R/(x_n^2)$ then, for $i>0$  $$H^i(R/(x_n) \otimes_{R/(x_n^2)} G^\bullet) = \Tor_i^{R/(x_n^2)}(R/(x_n),R/I) = 0.$$ But $R/(x_n) \otimes_{R/(x_n^2)} G^\bullet $ is then also a (minimal) free graded resolution of $$R/(x_n) \otimes_{R/(x_n^2)} R/I \cong R/(I+(x_n)) \cong \ov{R}{I}$$ over $R/(x_n) \cong \overline{R}$ and hence the statement follows. The proof for $R/G$ is identical using that we have $R/(G+(x_n))\cong \ov{R}{G}$ (Lemma \ref{lemma: canonical_isosny}).
\end{proof}

We are now ready to present the main result.

\begin{theorem}\label{thm: sum of betti}
    Let $I$ be minimally generated by $(x_1^{d_1},\dots,x_{n-1}^{d_{n-1}},x_n^2,\ell^{d_{n}})$, let $G = (x_1^{d_1},\dots,x_{n-1}^{d_{n-1}},x_n^2)\colon(\ell^{d_n})$, and assume that $t= \sum_{i=1}^n(d_i-1)$ is odd. Then
    \begin{align*}
        \beta_{i,j}^R(R/I) &=  \beta_{i,j}^{\overline{R}}(\ov{R}{I}) +  \beta_{i-1,j-2}^{\overline{R}}(\ov{R}{I}) \\
     \beta_{i,j}^R(R/G) &=  \beta_{i,j}^{\overline{R}}(\ov{R}{G}) +  \beta_{i-1,j-2}^{\overline{R}}(\ov{R}{G}),      
    \end{align*}
    where $\overline{I} = (x_1^{d_1},\dots,x_{n-1}^{d_{n-1}},\overline{l}^{d_n})$ and $\overline{G} = (x_1^{d_1},\dots,x_{n-1}^{d_{n-1}})\colon (\overline{l}^{d_n})$.
    Moreover, the Betti numbers of $\ov{R}{I}$ and $\ov{R}{G}$ are determined by Theorem \ref{thm: Ralf} and Proposition \ref{prop: G-res-koszul}.
\end{theorem}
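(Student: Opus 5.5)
The plan is to follow the outline given at the start of this section, combining liftability with Proposition \ref{prop: betti number change}. Let $M$ denote either $R/I$ or $R/G$, viewed as an $R/(x_n^2)$-module. The first step is to show that $M$ is liftable to $R$.

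For $M=R/I$ I take $L=R/I'$. By Remark \ref{remark: L-constr} we have $R/(x_n^2)\otimes_R L\cong R/(I'+(x_n^2))=R/I$. By Lemma \ref{lemma:regular}, $x_n$ is regular on $L$, hence so is $x_n^2$. Therefore $\Tor_i^R(L,R/(x_n^2))=0$ for $i>0$, and $R/I$ is liftable.

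For $M=R/G$ I take $L=R/G'$. Lemma \ref{lemma: canonical_isosny} gives $G=G'+(x_n^2)$, so $R/(x_n^2)\otimes_R R/G'\cong R/G$. Lemma \ref{lemma:regular} again gives regularity of $x_n$, hence of $x_n^2$, on $R/G'$. Both modules are therefore liftable, and in particular weakly liftable.

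Next I apply Proposition \ref{prop: liftable} with $s=x_n^2$ and $N=k$. It gives $\delta_i=0$ for all $i>0$, so the long exact sequence \eqref{eq: long-exact-tor} splits into short exact sequences
$$0\to \Tor_{i-1}^{R/(x_n^2)}(M,k)_{j-2}\to \Tor_i^R(M,k)_j\to \Tor_i^{R/(x_n^2)}(M,k)_j\to 0.$$
The shift by $2$ is the one point I would check carefully. It comes from the standard identification of the change-of-rings term with $\Tor^{R/(s)}_{i-1}(M,N)$ twisted by $\deg s$: the Koszul-type resolution of $R/(x_n^2)$ over $R$ has its generator in degree $2$. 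Taking dimensions gives
$$\beta^R_{i,j}(M)=\beta^{R/(x_n^2)}_{i,j}(M)+\beta^{R/(x_n^2)}_{i-1,j-2}(M).$$
Substituting the identities $\beta^{R/(x_n^2)}_{i,j}(R/I)=\beta^{\overline R}_{i,j}(\ov{R}{I})$ and the analogous one for $G$ from Proposition \ref{prop: betti number change} yields both displayed formulas.

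For the final claim, I first note that $\overline{I}=(x_1^{d_1},\dots,x_{n-1}^{d_{n-1}},\overline{\ell}^{d_n})$ consists of $n$ powers of general linear forms in $n-1$ variables. It is therefore a semi-regular sequence, by Theorem \ref{thm: monom-SLP} as explained in the preliminaries. The relevant degree sum is $\sum_{i\ne n}(d_i-1)+(d_n-1)$, where the index $i=n$ excluded from the first sum corresponds to the square $x_n^2$, whose contribution is $2-1=1$. This sum equals $t-1$. But Theorem \ref{thm: Ralf} and Proposition \ref{prop: G-res-koszul} require this sum to be odd, whereas $t-1$ is even when $t$ is odd. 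So at this step I have to check how $t$ is indexed in the statement, and I expect this index bookkeeping to be the main place where care is needed.

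My reading is that $t=\sum_{i=1}^n(d_i-1)$ sums over the degrees $d_1,\dots,d_{n-1},d_n$ of the powers of $x_1,\dots,x_{n-1}$ and of $\ell$, with the square excluded. That sum is precisely the exponent sum $\sum(d_i-1)$ for the $n$ generators of $\overline I$ in $\overline R$. Under this reading, oddness of $t$ is exactly the hypothesis of Theorem \ref{thm: Ralf} for $\overline{I}$ and of Proposition \ref{prop: G-res-koszul} for $\overline{G}$. The minimal generation hypothesis is also needed for Proposition \ref{prop: G-res-koszul}; I would check that it follows from $I$ being minimally generated, using the bound on $d_n$ established in the proof of Lemma \ref{lem:multi}. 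With these two checks, those results determine all Betti numbers of $\ov{R}{I}$ and $\ov{R}{G}$.
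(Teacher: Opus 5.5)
Your proposal is correct and follows the paper's proof essentially step for step: you establish liftability of $R/I$ and $R/G$ via $R/I'$ and $R/G'$ (Remark \ref{remark: L-constr}, Lemmas \ref{lemma:regular} and \ref{lemma: canonical_isosny}), split the change-of-rings sequence by Proposition \ref{prop: liftable}, and then apply Proposition \ref{prop: betti number change}. Your final reading of $t$ is the right one: $d_n$ is the exponent of $\ell$, so the square is excluded and oddness of $t$ is exactly the hypothesis of Theorem \ref{thm: Ralf} and Proposition \ref{prop: G-res-koszul} for $\overline{I}$ and $\overline{G}$. Your check that $\overline{I}$ is minimally generated when $t$ is odd also fills in a detail the paper leaves implicit.
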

\begin{proof}
Remark \ref{remark: L-constr} gives that $R/(I'+(x_n^2)) = R/I$ and Lemma \ref{lemma: canonical_isosny} that $R/(G'+(x_n^2)) = R/G$. Together with Lemma \ref{lemma:regular}, which says that $x_n$ is regular on both $R/I'$ and $R/G'$, we have that both $R/G$ and $R/I$ are liftable to $R$. Hence, by Proposition \ref{prop: liftable}, we obtain short exact sequences
 $$
 0 \xrightarrow{} \Tor_{i-1}^{R/(x_n^2)}(R/I,k)[-2] \xrightarrow{} \Tor_i^R(R/I,k) \xrightarrow{} \Tor_i^{R/(x_n^2)}(R/I,k) \to 0
 $$ for all $i$, and similarly for $R/G$.

 Restricting to homogeneous degree $j$, and using Proposition \ref{prop: betti number change},  we obtain 
 $$
 \beta_{i,j}^R(R/I) = \beta_{i,j}^{{R/(x_n^2)}}({R}/{I}) +  \beta_{i-1,j-2}^{{R/(x_n^2)}}({R}/{I}) = \beta_{i,j}^{\overline{R}}(\ov{R}{I}) +  \beta_{i-1,j-2}^{\overline{R}}(\ov{R}{I}).
 $$

 and similarly for $R/G$. Lastly, since $\sum_{i=1}^{n}(d_i-1)$ is odd, the Betti numbers of $\ov{R}{I}$ and $\ov{R}{G}$ follow directly from Theorem \ref{thm: Ralf} and Proposition \ref{prop: G-res-koszul}.
\end{proof}

Together with Theorem \ref{thm: Ralf} and Proposition \ref{prop: G-res-koszul} this determines the Betti numbers for any ideal of the form  $I = (x_1^{d_1},\dots,x_{n-1}^{d_{n-1}},x_n^2,\ell^{d_{n}})$, and $G =(x_1^{d_1},\dots,x_{n-1}^{d_{n-1}},x_n^2)\colon (\ell^{d_{n}})$. An easy but interesting consequence of Theorem \ref{thm: sum of betti} is the following.

\begin{corollary}\label{cor: all is level}
    Let $I= (x_1^{d_1},\dots,x_{n-1}^{d_{n-1}},x_n^2,\ell^{d_{n}})$. Then the algebra $R/I$ is level.
\end{corollary}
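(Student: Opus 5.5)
We split into two cases according to the parity of $t=\sum_{i=1}^{n}(d_i-1)$, the quantity that governs which earlier result applies. Note that this $t$ equals $\sum(\deg(\text{generator})-1)$ over all $n+1$ generators of $I$, since the generator $x_n^2$ contributes $1$ and the generators $x_1^{d_1},\dots,x_{n-1}^{d_{n-1}},\ell^{d_n}$ contribute $\sum_{i=1}^{n}(d_i-1)$; in total this is $t+1$. So when $t$ is \emph{even}, the full degree sum $\sum_{\text{gens}}(\deg -1)=t+1$ is odd. The sequence $x_1^{d_1},\dots,x_{n-1}^{d_{n-1}},x_n^2,\ell^{d_n}$ is semi-regular, as explained after Fröberg's conjecture via Theorem \ref{thm: monom-SLP}. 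Hence Corollary \ref{cor: level} applies directly and $R/I$ is level. If $I$ is not minimally generated, it is a complete intersection and therefore Gorenstein, hence trivially level. We may thus assume minimal generation in the remaining case.

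When $t$ is odd we use Theorem \ref{thm: sum of betti}. The algebra $R/I$ is level iff the last Betti numbers $\beta^R_{n,j}(R/I)$ are nonzero for only one $j$, since $\operatorname{Soc}(R/I)(-n)\cong \Tor_n^R(R/I,k)$. By the theorem,
$$\beta^R_{n,j}(R/I)=\beta^{\overline{R}}_{n,j}(\overline{R}/\overline{I})+\beta^{\overline{R}}_{n-1,j-2}(\overline{R}/\overline{I}).$$
The first term vanishes since $\overline{R}$ has $n-1$ variables, so $\operatorname{pd}_{\overline R}\le n-1$. Thus $\beta^R_{n,j}(R/I)=\beta^{\overline{R}}_{n-1,j-2}(\overline{R}/\overline{I})$, and $R/I$ is level exactly when $\overline{R}/\overline{I}$ is level. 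Now $\overline{I}=(x_1^{d_1},\dots,x_{n-1}^{d_{n-1}},\overline{\ell}^{d_n})$ consists of $(n-1)+1$ powers of general linear forms in $n-1$ variables, and its degree sum $\sum_{i=1}^n(d_i-1)=t$ is odd. Again this is a semi-regular sequence by Theorem \ref{thm: monom-SLP}, so Corollary \ref{cor: level} applied in $\overline{R}$ gives that $\overline{R}/\overline{I}$ is level, and we are done.

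The main point to check is the edge case where $\overline{I}$ fails to be minimally generated, which happens when $\overline{\ell}^{d_n}\in(x_1^{d_1},\dots,x_{n-1}^{d_{n-1}})$. In that case $\overline{R}/\overline{I}$ is a complete intersection, hence Gorenstein and level, so the conclusion still holds; Corollary \ref{cor: level} also does not need minimal generation. One should also confirm that the degenerate small cases (for instance $n=1$, or some $d_i=1$) are either complete intersections or are covered by the same argument. Beyond this, the proof is just the bookkeeping identity above, which transfers levelness from $R/I$ down to $\overline{R}/\overline{I}$.
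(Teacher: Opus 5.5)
Your proposal is correct and is exactly the paper's argument: for $t$ even you apply Corollary \ref{cor: level} to $I$ itself (its sum over all $n+1$ generators is $t+1$, which is odd), and for $t$ odd you use Theorem \ref{thm: sum of betti} to get $\beta^R_{n,j}(R/I)=\beta^{\overline{R}}_{n-1,j-2}(\overline{R}/\overline{I})$, then apply Corollary \ref{cor: level} to $\overline{I}$, whose degree sum is $t$. Your extra bookkeeping (the vanishing of $\beta^{\overline{R}}_{n,j}$ by projective dimension, and treating non-minimally generated cases as complete intersections) only makes explicit what the paper leaves implicit; in fact, when $I$ is minimally generated and $t$ is odd, $\overline{I}$ is automatically minimally generated, so the edge case you flag never occurs.
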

\begin{proof}
    If $t= \sum_{i=1}^{n}(d_i-1)$ is even then this follows by Corollary $\ref{cor: level}$. If $t$ is odd we find by Theorem $\ref{thm: sum of betti}$ that 
    $$
    \beta^R_{n,j}(R/I) = \beta_{n-1,j-2}^{\overline{R}}(\ov{R}{I}).
    $$ Since $\overline{I} =(x_1^{d_1},\dots,x_{n-1}^{d_{n-1}},\overline{\ell}^{d_{n}}) $ is level, by Corollary $\ref{cor: level}$, it follows that so is $R/I$.
\end{proof}

\begin{example}
The Betti numbers of the ideal $\overline{I} =(x_1^4,x_2^4,x_3^4,x_4^4,\overline{\ell}^4)$ are determined by its Hilbert function, by Theorem \ref{thm: Ralf}, and is given on the left below. From Theorem \ref{thm: sum of betti} we then obtain the Betti numbers of $I =(x_1^4,x_2^4,x_3^4,x_4^4,x_5^2,\ell^4)$, which are displayed to the right.

 \begin{align*}\begin{matrix}
        & 0 & 1 & 2 & 3 & 4\\
       \text{total:} & 1 & 5 & 30 & 46 & 20\\
       0: & 1 & . & . & . & .\\
       1: & . & . & . & . & .\\
       2: & . & . & . & . & .\\
       3: & . & 5 & . & . & .\\
       4: & . & . & . & . & .\\
       5: & . & . & . & . & .\\
       6: & . & . & 10 & . & .\\
       7: & . & . & 20 & 46 & 20
       \end{matrix} \quad \quad 
       \begin{matrix}
        & 0 & 1 & 2 & 3 & 4 & 5\\
       \text{total:} & 1 & 6 & 35 & 76 & 66 & 20\\
       0: & 1 & . & . & . & . & .\\
       1: & . & 1 & . & . & . & .\\
       2: & . & . & . & . & . & .\\
       3: & . & 5 & . & . & . & .\\
       4: & . & . & 5 & . & . & .\\
       5: & . & . & . & . & . & .\\
       6: & . & . & 10 & . & . & .\\
       7: & . & . & 20 & 56 & 20 & .\\
       8: & . & . & . & 20 & 46 & 20
       \end{matrix}.\end{align*}
Similarly, the Betti tables of the corresponding colon ideals $\overline{G}$ and $G$ are given, respectively, by

\begin{align*}
\begin{matrix}
        & 0 & 1 & 2 & 3 & 4\\
       \text{total:} & 1 & 24 & 46 & 24 & 1\\
       0: & 1 & . & . & . & .\\
       1: & . & . & . & . & .\\
       2: & . & . & . & . & .\\
       3: & . & 4 & . & . & .\\
       4: & . & 20 & 46 & 20 & .\\
       5: & . & . & . & 4 & .\\
       6: & . & . & . & . & .\\
       7: & . & . & . & . & .\\
       8: & . & . & . & . & 1
       \end{matrix} \quad \quad
    \begin{matrix}
        & 0 & 1 & 2 & 3 & 4 & 5\\
       \text{total:} & 1 & 25 & 70 & 70 & 25 & 1\\
       0: & 1 & . & . & . & . & .\\
       1: & . & 1 & . & . & . & .\\
       2: & . & . & . & . & . & .\\
       3: & . & 4 & . & . & . & .\\
       4: & . & 20 & 50 & 20 & . & .\\
       5: & . & . & 20 & 50 & 20 & .\\
       6: & . & . & . & . & 4 & .\\
       7: & . & . & . & . & . & .\\
       8: & . & . & . & . & 1 & .\\
       9: & . & . & . & . & . & 1
       \end{matrix}
\end{align*}

\end{example}

A fascinating consequence of Lemma \ref{lemma:regular} is that the syzygies of $I$ are more structured then one might guess. For a complete intersection it is known that (since the Koszul complex is the minimal free resolution) the components of the relations themselves lie in the ideal. However for most ideals, the syzygies tend to be very complicated. It turns out that the ideals we are studying satisfy a property similar to that of the Koszul relations.

\begin{corollary}\label{cor: syzygies}
Let $I = (x_1^{d_1},\dots,x_{n-1}^{d_{n-1}},x_n^2,\ell^{d_n})$ and let $(a_1,\dots,a_{n+1})\in R^{n+1}$ be a relation of $I$, such that $a_1x_1^{d_1} + \dots a_{n-1}x_{n-1}^{d_{n-1}} +a_nx_n^2 + a_{n+1}\ell^{d_n} = 0$. Then, if $\sum_{i=1}^n(d_i-1)$ is odd, we have
$$
c_1a_1x_1^{d_1-2} + \dots +c_{n-1}a_{n-1}x_{n-1}^{d_{n-1}-2} + a_n + c_{n}a_{n+1}\ell^{d_{n}-2} \in I,
$$ where $c_i = \sum_{i=0}^{\lfloor\frac{d_i-1}{2}\rfloor} (d_i-1-2j)^2$.

Most notably, for the fully quadratic case, when $n$ is odd any relation $(a_1,\dots,a_{n+1})$ of the ideal $I=(x_1^2,\dots,x_n^2,\ell^2)$ satisfies 
$$
a_1+\dots+a_{n+1} \in I.
$$

\end{corollary}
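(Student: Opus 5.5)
The plan is to lift the relation from $I$ to the ideal $I'=(f_1,\dots,f_n)$ constructed before Lemma \ref{lemma:regular}, and then use that $x_n$ is regular on $R/I'$ (Lemma \ref{lemma:regular}) together with $I'+(x_n^2)=I$ (Remark \ref{remark: L-constr}).

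\emph{Step 1: expand the $f_i$ in powers of $x_n$.} For $1\le i<n$ the roots $(d_i-1)-2j$ are symmetric about $0$. Pairing each root $a$ with $-a$ gives factors $x_i^2-a^2x_n^2$, plus one extra factor $x_i$ when $d_i$ is odd, as in Remark \ref{remark: L-constr}. Hence $f_i$ involves only even powers of $x_n$, and
$$
f_i = x_i^{d_i} - c_i x_i^{d_i-2}x_n^2 + x_n^4 h_i
$$
for some $h_i\in R$, where $c_i=\sum_{j=0}^{\lfloor (d_i-1)/2\rfloor}(d_i-1-2j)^2$. The root $0$, which occurs when $d_i$ is odd, contributes nothing to this sum, so it does no harm. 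The same computation with $x_i$ replaced by $\ell$ gives $f_n=\ell^{d_n}-c_n\ell^{d_n-2}x_n^2+x_n^4h_n$.

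\emph{Step 2: form the lifted combination.} Given the relation $(a_1,\dots,a_{n+1})$, consider
$$
F=\sum_{i=1}^{n-1}a_if_i + a_{n+1}f_n \in I'.
$$
Substitute the expansions from Step 1 and subtract the relation $\sum_{i<n} a_ix_i^{d_i}+a_nx_n^2+a_{n+1}\ell^{d_n}=0$. This yields
$$
F = -x_n^2\Big(a_n+\sum_{i=1}^{n-1}c_ia_ix_i^{d_i-2}+c_na_{n+1}\ell^{d_n-2}\Big) + x_n^4 h
$$
for some $h\in R$. Call the bracketed expression $S$. Then $x_n^2(S-x_n^2h)\in I'$.

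\emph{Step 3: cancel $x_n^2$.} Since $t$ is odd, Lemma \ref{lemma:regular} says $x_n$, and hence $x_n^2$, is a nonzerodivisor on $R/I'$. Therefore $S-x_n^2h\in I'$, and so $S\in I'+(x_n^2)=I$, which is the claim.

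\emph{The quadratic case.} When all $d_i=2$ we have $c_i=1^2=1$ and $x_i^{d_i-2}=\ell^{d_n-2}=1$. The condition that $t=n$ is odd is exactly the hypothesis that $n$ is odd, and $S$ becomes $a_1+\dots+a_{n+1}$.

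Once Lemma \ref{lemma:regular} is available there is no real obstacle. The only care needed is the bookkeeping in Step 1: checking that the odd powers of $x_n$ cancel by the $\pm a$ symmetry, and that the $x_n^2$-coefficient is exactly $-c_i$ with the stated summation range. Note that the definition of $f_n$ should use $d_n$ in its roots.
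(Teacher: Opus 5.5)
Your proposal is correct and matches the paper's proof: expand each $f_i$ (and $f_n$, in terms of $\ell$) as $x_i^{d_i}-c_ix_i^{d_i-2}x_n^2+x_n^4(\cdots)$, subtract the relation from $\sum_{i<n}a_if_i+a_{n+1}f_n$ to get $x_n^2(S-x_n^2h)\in I'$, cancel $x_n^2$ using Lemma \ref{lemma:regular}, and reduce via $I'+(x_n^2)=I$. Your sign bookkeeping and identification of $c_i$ (including the harmless root $0$ for odd $d_i$) are right. You are also right that the roots in $f_n$ should involve $d_n$ rather than $d_i$.
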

\begin{proof}
    This result is tightly related to the fact that $x_n$ is regular on $L=R/I'$.
As per the definition of $L$ we set $f_{i}= \prod_{j=0}^{d_i-1}(x_i-((d_i-1)-2j)x_n)$ and similarly $f_{n}= \prod_{j=0}^{d_i-1}(\ell-((d_i-1)-2j)x_n)$. By Remark \ref{remark: L-constr} we have $f_{i} = x_i^{d_i} - c_ix_i^{d_i-2}x_n^2 + r_ix_n^4$, for some $r_i \in R_{d_i-4}$. For any relation $$
a_1x_1^{d_1} + \dots +a_nx_n^2 + a_{n+1}\ell^{d_n} = 0,
$$ we can rewrite it as
\begin{align*}
    &a_1f_{1} + \dots + + a_{n-1}f_{n-1} + a_{n+1}f_{n}   \\ &+x_n^2\left(a_n -\sum_{i=1}^{n-1}a_i\frac{(f_{i}-x_i^{d_i})}{x_n^2} - a_{n+1}\frac{(f_{n}- \ell^{d_i})}{x_n^2}\right)   = 0.
\end{align*}

Lemma \ref{lemma:regular} tells us that $x_n$ is regular on $L=R/(f_1,\dots,f_n)$, and hence 
$$
a_n -\sum_{i=1}^{n-1}a_i\frac{(f_{i}-x_i^{d_i})}{x_n^2} - a_{n+1}\frac{(f_{n}- \ell^{d_i})}{x_n^2} \in (f_{1},\dots,f_{n}).
$$ Since $\frac{(f_{i}-x_i^{d_i})}{x_n^2} = -c_ix_i^{d_i-2} + r_ix_n^2$, we obtain that
$$
\sum_{i=1}^{n-1}a_ic_ix_i^{d_i-2} + a_n + c_{n}a_{n+1}\ell^{d_{n}-2} \in (f_{1},\dots,f_{n}) + (x_n^2) = I.
$$
\end{proof}

Lastly, using Corollary \ref{cor: all is level}, we also give an application to generic ideals. A special case, when all generators are quadrics, of the following result was very recently proved in \cite{diethorn2026homologicalpropertiesringsdefined}.

\begin{corollary}\label{cor: generic level}
A generic ideal of $k[x_1,\dots,x_n]$ generated by $n+1$ forms, where at least one of the generators is a quadric, is level.
\end{corollary}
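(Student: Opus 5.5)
The plan is a semicontinuity argument. We use that levelness is an open condition within a family of ideals with constant Hilbert function, and that the special ideal $I=(x_1^{d_1},\dots,x_{n-1}^{d_{n-1}},x_n^2,\ell^{d_n})$ of Corollary \ref{cor: all is level} is a member of this family.

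Fix degrees $d_1,\dots,d_{n-1},2,d_n$ and consider the affine space $V$ of coefficient tuples for $n+1$ forms of these degrees. After a linear change of coordinates, any ideal generated by powers of $n+1$ general linear forms, with one power a square, is of the form $I$ above. First, the Hilbert function on $V$ has a minimum, namely the Fröberg series, since $I$ is a semi-regular sequence (as noted after Fröberg's conjecture via Theorem \ref{thm: monom-SLP}). The locus $U_1\subseteq V$ where $\HF_{R/(f_1,\dots,f_{n+1})}$ equals this minimal series is a nonempty Zariski open set: each $\dim_k (f)_j$ is lower semicontinuous, given by ranks of matrices, and only finitely many degrees matter because $R/(f)$ is Artinian with bounded socle degree on this locus.

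Second, on $U_1$ all algebras $R/(f)$ share the same Hilbert function $h$. For such a flat family the graded Betti numbers $\beta_{n,j}$ are upper semicontinuous, being computed as $\dim_k\Tor_n(R/(f),k)_j$, i.e.\ as the homology of the Koszul complex on $x_1,\dots,x_n$ tensored with $R/(f)$ in fixed degree. That homology is the kernel minus the image of maps between vector spaces whose dimensions are constant on $U_1$, so it is upper semicontinuous. Concretely, $\beta_{n,n+j}(R/(f))=\dim_k \mathrm{Soc}(R/(f))_j$, and the socle in degree $j$ is the kernel of the map $(R/(f))_j\to \bigoplus_i (R/(f))_{j+1}$ given by multiplication by the variables. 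The matrix of this map varies algebraically once we choose, over an open cover of $U_1$, bases of the quotients; these can be taken to be fixed sets of monomials whose images form a basis. Its rank is lower semicontinuous, so $\dim\mathrm{Soc}_j$ is upper semicontinuous.

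Third, let $s$ be the top degree with $h(s)\neq0$. Levelness means $\mathrm{Soc}_j=0$ for all $j<s$. By the previous step this is an open condition on $U_1$, and by Corollary \ref{cor: all is level} it holds at the point corresponding to $I$. Hence it holds on a nonempty open subset of the irreducible $V$, which is precisely the statement that a generic ideal is level. The generator of degree $2$ may be placed anywhere, so reordering handles the phrasing "at least one quadric."

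The main obstacle is making the second step rigorous, namely upper semicontinuity of socle dimensions in a family with constant Hilbert function. I would handle it as sketched: locally choose monomial bases of the quotient and write the multiplication maps as matrices with entries regular on an open set. Alternatively, one can cite upper semicontinuity of graded Betti numbers in flat families, since constant Hilbert function over a reduced base gives flatness. A minor point is that $I$ must be minimally generated for Corollary \ref{cor: all is level} to be stated as used. This holds for general linear forms unless $R/I$ is a complete intersection, and in that case $R/I$ is Gorenstein, hence level anyway.
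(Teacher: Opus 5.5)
Your argument is correct and is essentially the paper's proof: restrict to the nonempty open locus of minimal Hilbert series, which contains $I$ by Theorem \ref{thm: monom-SLP}; use upper semicontinuity of $\beta_{n,j}$ (equivalently, of socle dimensions) on that locus; and conclude from the levelness of $I$ given by Corollary \ref{cor: all is level}. The only difference is that the paper gets the semicontinuity by citing Howell's theorem on the existence of an open subset with minimal graded Betti numbers, whereas you verify it directly via local monomial bases (or via flatness), which is a valid substitute.
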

\begin{proof}
Fix some degrees $\boldsymbol{d}=(d_1,\dots,d_{n+1})$ with $d_n=2$. Then, by identifying a homogeneous polynomial with its coefficients in the monomial basis, every ideal $(f_1,\dots,f_{n+1})$ in $k[x_1,\dots,x_n]$, with $\deg f_i =d_i$, corresponds to a point in the affine space $\mathbb{A}_k^{N}$, where $N=\sum_{i=1}^{n+1}\binom{n+d_i-1}{d_i}$. It is well known, see for example Theorem 1 in \cite{Ralf-Clas-generic}, that there exists a Zariski open subset $Y\subset \mathbb{A}_k^N$ for which every ideal has the same Hilbert series, moreover this is the minimal possible Hilbert series among all ideals generated by $n+1$ forms in degrees $\boldsymbol{d}$. Note that by Theorem \ref{thm: monom-SLP}, we have that $I = (x_1^{d_1},\dots,x_{n-1}^{d_{n-1}},x_n^{2},\ell^{d_{n+1}}) \in Y$. By Theorem 6.0.8 in \cite{genericideals-howell}, there exists a non-empty open subset $X \subset Y$, for which every ideal has minimal graded Betti numbers among ideals in $Y$. Now by Corollary $\ref{cor: all is level}$ we have that $\beta^R_{n,j}(R/I) = 0$ for $j \le n+s-1$, where $s$ denote the socle degree of $R/I$ (and thus the socle degree of any ideal in $Y$). It follows that for any $J \in X$ we have $\beta^R_{n,j}(R/J)=0$ for $j \le n+s-1$, and hence $J$ is level. Since $X$ is non-empty and (Zariski) open in $\mathbb{A}_k^N$, this gives the claim.
\end{proof}

A natural question is to what 
extent these methods can be generalized further, in order to determine the Betti numbers when no generator is a quadric. That is, for which integers $d_1,\dots,d_{n+1}$ is it that $R/I$ is liftable (from some $R/(x_i^{d_i})$) to $R$? In the cases we have studied the Betti numbers reduce to a sum of two Betti numbers from another ideal. Computer calculations, done in Macaulay2, seem to indicate that such a simple relation can not be expected in general. Below we provide an example aimed to showcase this.

\begin{example}
The Betti numbers of $I=(x_1^3,x_2^3,x_3^3,x_4^3,\ell^3)$ calculated by Macaulay2 \cite{M2}, are given below.
$$
\begin{matrix}
        & 0 & 1 & 2 & 3 & 4\\
       \text{total:} & 1 & 5 & 17 & 20 & 7\\
       0: & 1 & . & . & . & .\\
       1: & . & . & . & . & .\\
       2: & . & 5 & . & . & .\\
       3: & . & . & . & . & .\\
       4: & . & . & 16 & 10 & 1\\
       5: & . & . & 1 & 10 & 6 
        \end{matrix} $$
One can check that there exists no cyclic graded $R$-module $M$ such that $\beta_{i,j}^{R}(R/I) = \beta_{i,j}^R(M) + \beta^R_{i-1,j-d}(M)$.
\end{example}

\section{SLP for the linked Artinian Gorenstein algebra}
The main goal of this short section is to show that the AG-algebra $G =(x_1^{d_1},\dots,x_{n}^{d_n})\colon (\ell^{d_{n+1}})$ enjoys the strong Lefschetz property. In fact we show the stronger statement 

\begin{theorem}\label{thm: SLP-colon}
    Let $A$ be a graded Artinian $k$-algebra with strong Lefschetz element $\ell \in A_1$. Then $A/[0:(\ell^j)]$ has the SLP, with $\ell$ as a strong Lefschetz element, for all $j> 0$.
\end{theorem}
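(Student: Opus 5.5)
The plan is to identify $B = A/[0:(\ell^j)]$ with the principal ideal $\ell^jA \subseteq A$ and compute every rank of multiplication by $\ell^m$ on $B$ inside $A$. There, the strong Lefschetz property of $A$ gives all the ranks as explicit minima of values of $\HF_A$. Write $a_i = \HF_A(i) = \dim_k A_i$.

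\emph{Step 1 (identification).} Multiplication by $\ell^j$ gives an isomorphism of graded $A$-modules
$$B[-j] = A/[0:(\ell^j)][-j] \xrightarrow{\ \cdot \ell^j\ } \ell^jA.$$
It is injective exactly because we quotient by the annihilator of $\ell^j$. Consequently $B_i \cong \ell^jA_i \subseteq A_{i+j}$, so $\dim_k B_i$ is the rank of $A_i \xrightarrow{\cdot \ell^j} A_{i+j}$. Since $\ell$ is a strong Lefschetz element of $A$, this gives
$$\dim_k B_i = \min(a_i, a_{i+j}).$$

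\emph{Step 2 (ranks on $B$).} Under the isomorphism of Step 1, the map $B_i \xrightarrow{\cdot \ell^m} B_{i+m}$ becomes the map $\ell^jA_i \xrightarrow{\cdot \ell^m} \ell^jA_{i+m}$. Its image is $\ell^{j+m}A_i \subseteq A_{i+j+m}$. By the SLP of $A$, applied to the power $\ell^{j+m}$, its rank is
$$\min(a_i, a_{i+j+m}).$$

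\emph{Step 3 (comparison).} The map $A_i \xrightarrow{\cdot \ell^{j+m}} A_{i+j+m}$ factors through $A_{i+j}$, and also through $A_{i+m}$. Hence its rank is at most $a_{i+j}$ and at most $a_{i+m}$. This gives
$$\min(a_i,a_{i+j+m}) = \min(a_i,a_{i+j},a_{i+m},a_{i+j+m}) = \min\bigl(\dim_k B_i, \dim_k B_{i+m}\bigr),$$
where the last equality uses Step 1 twice, once for $B_i$ and once for $B_{i+m}$ (since $\dim_k B_{i+m} = \min(a_{i+m}, a_{i+m+j})$). So multiplication by $\ell^m$ from $B_i$ to $B_{i+m}$ has maximal rank for all $i$ and $m$. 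Thus $B$ has the SLP with $\ell$ as a strong Lefschetz element.

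I expect no real obstacle here. The one point to handle carefully is Step 2: the rank must be computed as the dimension of $\ell^{j+m}A_i$ inside $A$, rather than from $\HF_B$ directly. The factorization argument in Step 3 then replaces any need for unimodality of $\HF_A$, so no assumption like Theorem \ref{thm:RRR} is required. Applying this with $A = R/(x_1^{d_1},\dots,x_n^{d_n})$, Theorem \ref{thm: monom-SLP} and $j = d_{n+1}$ yields the SLP for $R/G$.
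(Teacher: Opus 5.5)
Your proof is correct, and it takes a different route from the paper's.

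The paper lets $m$ be the least degree with $\ell^j\colon A_m\to A_{m+j}$ surjective. It then describes $B=A/[0:(\ell^j)]$ degreewise: $B_i=A_i$ for $i<m$, and $B_i\cong A_{i+j}=\ell^jA_i$ for $i\ge m$. A case split on where $i$ and $i+k$ sit relative to $m$ identifies each map $\ell^k\colon B_i\to B_{i+k}$ with either $\ell^k$ or $\ell^{j+k}$ acting between full graded pieces of $A$. Maximal rank is then inherited directly through vector-space isomorphisms.

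You instead identify $B$ uniformly with the submodule $\ell^jA$. Since $\ell^jA_i$ is in general a proper subspace of $A_{i+j}$, maximal rank does not transfer automatically. You compensate with the explicit dimension count $\dim_k B_i=\min(a_i,a_{i+j})$ and the factorization bound: the rank of $\ell^{j+m}$ on $A_i$ is at most $\min(a_{i+j},a_{i+m})$.

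Your version buys two things. There is no case analysis. And you never need the fact, used tacitly in the paper, that once $\ell^j$ is surjective in degree $m$ it stays surjective in all degrees $\ge m$. That fact holds because $A$ is standard graded, via $A_{i+j}=A_1^{\,i-m}A_{m+j}=\ell^jA_1^{\,i-m}A_m$, but the paper does not say so. Your argument therefore works verbatim for any graded Artinian algebra with a strong Lefschetz element in degree one.

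The paper's version in turn gives an explicit picture of $B$, each graded piece being either $A_i$ or $A_{i+j}$. This makes the Hilbert function of $B$ transparent and matches the description of $\HF_{R/G}$ in Lemma~\ref{lemma: colon ideal lemma}.
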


We want to emphasize that in some ways this result is not new. Namely in Theorem 3.8 of \cite{Watanabe} a similar result is presented, however from the way it is stated it is not immediates that our statement above holds. Similarly in Theorem 4.14 of \cite{Iarrobinoa-Kanev-Book}, which is a corollary of the result from \cite{Watanabe}, it is stated that for a generic $F$, we have that  $A/[0:(F)]$ has the SLP. Also here it is not clear that the choice of a power of a Lefschetz element is ``generic enough". If one however more carefully reads the proof in \cite{Watanabe} it becomes clear that this is the case. We believe that the statement of Theorem \ref{thm: SLP-colon} is not well known, for example in \cite{diethorn2025studyquadraticcompleteintersection} when trying to prove that $G = (x_1^{2},\dots,x_{n}^{2})\colon (\ell^{{2}})$ has the SLP (their Proposition 5.5)  they state that the result of \cite{Iarrobinoa-Kanev-Book} (they cite \cite{res-n+1-general-forms}, but their arugment cites \cite{Iarrobinoa-Kanev-Book}) is not obviously applicable to this case and hence resort to other methods. For this reason we provide a simple proof, which closely follows the original idea of \cite{Watanabe}.

\begin{proof}[Proof of Theorem \ref{thm: SLP-colon}]
    Let $m$ be the smallest integer for which $$\cdot l^j \colon A_m \to A_{m+j}$$ is surjective. Then $(A/[0:(l^j)])_i = A_i$ for $i<m$ and $(A/[0:(l^j)])_i \cong A_{i+j} = l^jA_i$ for $i \ge m$. Hence the multiplication map $$(A/[0:(l^j)])_{i} \xrightarrow{\cdot l^k} (A/[0:(l^j)])_{i+k}$$ is simply, with the above identification, equal to either $\ell^k$ or a composition of $l^j$ and $l^k$ on $A$, which both clearly have maximal rank.
\end{proof}

\begin{corollary}
    Let $G  =(x_1^{d_1},\dots,x_{n}^{d_n})\colon (\ell^{d_{n+1}})$. Then $R/G$ has the SLP.
\end{corollary}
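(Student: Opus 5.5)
The corollary should follow directly from Theorem \ref{thm: monom-SLP} and Theorem \ref{thm: SLP-colon}. Set $J=(x_1^{d_1},\dots,x_n^{d_n})$ and $A=R/J$.

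First, I would identify the Lefschetz element. By Theorem \ref{thm: monom-SLP}, $A$ has the SLP. The proof should give, or it is classical, that $\ell=x_1+\dots+x_n$ itself is a strong Lefschetz element. This can be seen, for instance, from the $\mathfrak{sl}_2$-representation argument, where $\ell$ acts as the raising operator on $\bigotimes_i k[x_i]/(x_i^{d_i})$ in characteristic $0$. If one only knows that a general linear form is a strong Lefschetz element, I would argue as follows. After rescaling the variables $x_i\mapsto c_ix_i$ with $c_i\neq 0$, which preserves $J$, a general form $\sum c_ix_i$ becomes $\ell$. The Lefschetz elements form a nonempty Zariski open set, and this set is stable under the torus action. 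The coefficients of a general form are all nonzero, so the open set contains some $\sum c_i x_i$ with every $c_i\neq 0$, and hence it contains $\ell$.

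Second, I would rewrite the quotient. We have $R/G = R/(J\colon \ell^{d_{n+1}}) \cong A/[0:_A(\ell^{d_{n+1}})]$, because an element $f\in R$ satisfies $f\ell^{d_{n+1}}\in J$ exactly when $\bar f \in 0:_A \ell^{d_{n+1}}$. Applying Theorem \ref{thm: SLP-colon} with $j=d_{n+1}>0$ then shows that $A/[0:(\ell^{d_{n+1}})]$ has the SLP with $\ell$ as a strong Lefschetz element, so $R/G$ has the SLP.

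The only step needing care is the first one, namely justifying that the specific form $\ell$, and not merely a general linear form, is a strong Lefschetz element for $A$. I expect the torus-rescaling argument to handle it cleanly.
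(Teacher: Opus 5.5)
Your proposal is correct and matches the paper's proof: set $A=R/J$, identify $R/G\cong A/[0:_A(\ell^{d_{n+1}})]$, and apply Theorem \ref{thm: SLP-colon}, using that $\ell=x_1+\dots+x_n$ is a strong Lefschetz element of $A$. The paper takes that last fact for granted, so your torus-rescaling argument (or the $\mathfrak{sl}_2$ argument) is a valid justification of it rather than a different route.
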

\begin{proof}
    Since $J =  (x_1^{d_1},\dots,x_{n}^{d_n})$ has the strong Lefschetz property with $\ell = x_1+\dots+x_n$, it follows from Theorem \ref{thm: SLP-colon} that $(R/J)/[0:(l^{d_{n+1}})] \cong R/[J:(l^{d_{n+1}})]$ has the SLP.
    \end{proof}

Let $S = k[X_1,\dots,X_n]$. We consider $S$ as the inverse system of $R$, meaning that we view it as an $R$-module with action given by $x_i \circ f = \frac{\partial}{\partial x_i}f$. For any homogeneous ideal $I$ we associate the $R$-module $I^\perp$ defined as
$$
I^\perp = \{f\in S\mid g\circ f = 0, \ \forall\ g \in I\}
$$ which we call the (Macaulay) inverse system of $I$.

Similarly for any graded $R$-module $B \subset S$ we can associate an ideal $$\text{Ann}(B) = \{f \in R \mid f \circ b = 0 , \ \forall \ b \in B\},$$

which we call the annihilator ideal of $B$.
This is a special case of Matlis duality (see section 3.6 in \cite{BH-CMR}) and it is well known that this gives an order reversing bijection between graded Artinian ideals of $R$ and finitely generated graded $R$-submodules of $S$. Moreover  $R/I$ is Artinian Gorenstein if and only if $I^\perp$ is a cyclic $R$-module, i.e. if $I$ is exactly the elements $a \in R$ satisfying $a\circ F = 0$ for some homogeneous polynomial $F \in S$. In this case we call $F$ the dual generator of $I$.
In general it is not easy to see from a dual generator whether the algebra has SLP or not, and hence describing such generators is an interesting problem in itself. As it turns out, the dual generators of the ideals $G$ we have been studying are easy to describe, and so from this description we obtain a new family of polynomials whose associated $AG$-algebra have the SLP.  

A more restricted version of the following lemma can be found in \cite{dual-gens-Frob} (Lemma 2.7), but the proof is essentially the same.

\begin{lemma}\label{lemma: inverse-gen} Let $R/J$ be an Artinian Gorenstein algebra with dual generator $G$.
For any Artinian Gorenstein algebra of the form $$A =R/ [J \colon (f)],$$ the dual generator of $A$ is given by
$$
F = f\circ G.
$$
\end{lemma}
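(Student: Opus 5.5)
We must show that the dual generator of $A=R/[J\colon(f)]$ is $F=f\circ G$. Equivalently, $\text{Ann}(f\circ G)=J\colon(f)$, where by assumption $J=\text{Ann}(G)$. The plan is to prove this equality of ideals directly from the definitions of the contraction action and the annihilator. No structure beyond Macaulay duality is needed.

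The key identity is associativity of the action: for $a,f\in R$ and $G\in S$ we have $a\circ(f\circ G)=(af)\circ G$. This holds because $S$ is an $R$-module, with each $x_i$ acting as $\partial/\partial x_i$ and these operators commuting. Using it, for $a\in R$ we get
$$
a\in\text{Ann}(f\circ G)\iff (af)\circ G=0\iff af\in\text{Ann}(G)=J\iff a\in J\colon(f).
$$
This proves $\text{Ann}(f\circ G)=J\colon(f)$ as sets, and hence as homogeneous ideals when $f$ is homogeneous.

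It remains to argue that this really says $F$ is the dual generator of $A$. By Matlis duality, $A=R/\text{Ann}(F)$ is Artinian Gorenstein with inverse system the cyclic module $R\circ F$. Indeed, $(J\colon(f))^\perp=R\circ F$, since inverse systems and annihilators are mutually inverse order-reversing bijections and $\text{Ann}(R\circ F)=\text{Ann}(F)$.

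I expect the only point needing care, rather than a genuine obstacle, to be degenerate cases. We should take $f$ homogeneous, so that $F$ is homogeneous of degree $\deg G-\deg f$. We should also note $f\notin J$, since otherwise $F=0$ and $J\colon(f)=R$, and $A=0$ is excluded by the Artinian Gorenstein hypothesis. In the intended application, $f=\ell^{d_{n+1}}$ and $G=X_1^{d_1-1}\cdots X_n^{d_n-1}$, which will give the explicit dual generators used afterwards.
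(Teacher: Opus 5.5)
Your proof is correct and is essentially the paper's argument: the chain $a\in\text{Ann}(f\circ G)\iff (af)\circ G=0\iff af\in J\iff a\in J\colon(f)$ is the entire proof given there. Your extra remarks are harmless supplements that the paper leaves implicit: the Matlis duality justification that this identifies $F$ as the dual generator, homogeneity of $f$, and excluding $f\in J$.
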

\begin{proof}
 An element $g \in R$ satisfying $g \circ (f\circ G)=0$ is equivalent to 
 \begin{align*}
 (gf)\circ G = 0 \iff gf \in J \iff \\ g \in  J \colon (f) = A.
 \end{align*}
  We conclude that $\text{Ann}(F) = J:(f)$.
\end{proof}

Combining this with Theorem \ref{thm: SLP-colon} we obtain the following proposition.

\begin{prop}\label{prop: colon-SLP-dual}
    Let $F$ be a homogeneous form such that $R/\text{Ann}(F)$ has the SLP with $\ell$ as strong Lefschetz element. Then, for all $d>0$, the Artinian Gorenstein algebra given by the dual generator $\ell^d \circ F $ has the SLP.
\end{prop}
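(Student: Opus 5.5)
The plan is to combine Lemma \ref{lemma: inverse-gen} with Theorem \ref{thm: SLP-colon}, applied to the algebra $A = R/\text{Ann}(F)$. We may assume $\ell^d\circ F\neq 0$, since otherwise there is nothing to prove (or the statement is vacuous). First, $R/\text{Ann}(F)$ is Artinian Gorenstein with dual generator $F$, by the Matlis duality correspondence recalled above. Setting $J=\text{Ann}(F)$ and $f=\ell^d$, Lemma \ref{lemma: inverse-gen} gives
$$
\text{Ann}(\ell^d\circ F) = J\colon(\ell^d).
$$
The proof of that lemma only uses $g\circ(f\circ G)=(gf)\circ G$, so it applies here. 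Hence the Artinian Gorenstein algebra with dual generator $\ell^d\circ F$ is $R/[J\colon(\ell^d)]$.

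Next, we identify this quotient with a colon in $A$. Writing $A=R/J$, the ideal $[0:_A(\ell^d)]$ is the image of $J\colon(\ell^d)$ in $A$. Therefore
$$
A/[0:(\ell^d)] \cong R/[J\colon(\ell^d)].
$$
This is the same identification already used in the corollary following Theorem \ref{thm: SLP-colon}.

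Finally, since $A$ has the SLP with strong Lefschetz element $\ell$ by hypothesis, Theorem \ref{thm: SLP-colon} with $j=d$ shows that $A/[0:(\ell^d)]$ has the SLP, again with $\ell$ as strong Lefschetz element. Combined with the two steps above, this proves the proposition.

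There is no real obstacle here. The only point requiring care is checking that the duality convention $(gf)\circ G=g\circ(f\circ G)$ holds. It does, because the contraction/differentiation action makes $S$ an $R$-module. With that, the annihilator of $\ell^d\circ F$ is exactly the colon ideal, so the new algebra is literally the quotient to which Theorem \ref{thm: SLP-colon} applies.
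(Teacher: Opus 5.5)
Your proposal is correct and is essentially the paper's proof: Lemma \ref{lemma: inverse-gen} gives $\text{Ann}(\ell^d\circ F)=\text{Ann}(F)\colon(\ell^d)$, you identify $A/[0:(\ell^d)]\cong R/[\text{Ann}(F)\colon(\ell^d)]$, and then apply Theorem \ref{thm: SLP-colon}. Your remark on the degenerate case $\ell^d\circ F=0$ is a harmless addition that the paper leaves implicit.
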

\begin{proof}
    Setting $A=R/\text{Ann}(F)$ we have from Theorem \ref{thm: SLP-colon} that $$A/[0\colon(\ell^d)] \cong R/[\text{Ann}(F):(l^d)]$$ has the SLP. Lemma \ref{lemma: inverse-gen} gives that the dual generator of $R/[\text{Ann}(F):(\ell^d)] $ is $\ell^d\circ F$, which concludes the proof.
\end{proof}

We finish this section by applying Proposition \ref{prop: colon-SLP-dual} to the monomial complete intersections, from which we obtain some new examples of homogeneous forms which are dual generators of algebras with the SLP.

\begin{corollary}\label{cor: elementary-symmetric}
For any monomial $M$ and integer $d$, the AG-algebra defined by the dual generator $(x_1+\dots+x_n)^d \circ M$ has the SLP. Specifically, the annihilator ideal of an elementary symmetric polynomial $e_{d}(x_1,\dots,x_n)$ has the SLP.
\end{corollary}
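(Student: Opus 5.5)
The plan is to apply Proposition \ref{prop: colon-SLP-dual} with $F = M$ a monomial in the inverse system $S$. Write $M = X_1^{a_1}\cdots X_n^{a_n}$. The first step is the standard computation of its annihilator: a monomial $x^b$ kills $M$ exactly when $b_i > a_i$ for some $i$. Since $\text{Ann}(M)$ is a monomial ideal, this gives
$$
\text{Ann}(M) = (x_1^{a_1+1},\dots,x_n^{a_n+1}).
$$
Variables with $a_i = 0$ contribute the linear generator $x_i$. In that case I pass to the polynomial ring in the remaining variables, or equivalently allow $d_i = 1$ in Theorem \ref{thm: monom-SLP}, which costs nothing.

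By Theorem \ref{thm: monom-SLP}, $R/\text{Ann}(M)$ is a monomial complete intersection and has the SLP. The one point to check is that $\ell = x_1+\dots+x_n$ is itself a strong Lefschetz element, since Proposition \ref{prop: colon-SLP-dual} requires exactly that element. This is the standard form of the Stanley--Watanabe result, and the paper has already used it in the proof of the corollary following Theorem \ref{thm: SLP-colon}. If one is careful, the general statement that a general linear form is Lefschetz combines with the torus action $x_i \mapsto \lambda_i x_i$, which preserves the monomial ideal, to move any general form $\sum \lambda_i x_i$ with all $\lambda_i \neq 0$ to $\ell$.

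With this in place, Proposition \ref{prop: colon-SLP-dual} shows that the AG-algebra with dual generator $\ell^d\circ M$ has the SLP for every $d>0$. For $d=0$ the claim is just Theorem \ref{thm: monom-SLP}. If $d > \deg M$, then $\ell^d\circ M = 0$ and the statement is vacuous or trivial.

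For the elementary symmetric polynomial, I take $M = X_1X_2\cdots X_n$, so that $\text{Ann}(M) = (x_1^2,\dots,x_n^2)$. Expanding $\ell^{d}$ and using that any $x^b$ with some $b_i\ge 2$ kills $M$, only squarefree monomials survive:
$$
\ell^{d}\circ M = d!\sum_{|S|=d} \prod_{i\notin S} X_i = d!\, e_{n-d}(X_1,\dots,X_n).
$$
Since scalars do not change the annihilator, taking $d = n-k$ shows that $\text{Ann}(e_k)$ has the SLP for every $0\le k\le n$. The case $k=n$ is covered directly by Theorem \ref{thm: monom-SLP}, and the case $k=0$ is trivial.

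The only step I expect to need any care is confirming that the specific form $\ell$ is a strong Lefschetz element of the monomial complete intersection; everything after that is formal.
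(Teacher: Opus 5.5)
Your proposal is correct and follows essentially the same route as the paper: identify $\text{Ann}(M)$ as a monomial complete intersection with $\ell$ as strong Lefschetz element, apply Proposition \ref{prop: colon-SLP-dual}, and compute $\ell^d\circ(X_1\cdots X_n)=d!\,e_{n-d}$. Your additional checks fill in details the paper leaves implicit: the case $a_i=0$, the edge values of $d$, and the fact that the specific form $\ell$ (not just a general linear form) is a strong Lefschetz element.
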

\begin{proof}
    Since the dual generator of $J = (x_1^{d_1},\dots,x_n^{d_n})$ is $x_1^{d_1-1}\cdots x_n^{d_n-1}$, the first statement follows directly from Proposition \ref{prop: colon-SLP-dual}. Noting that $(x_1+\dots+x_n)^d \circ (x_1\cdots x_n) = d!e_{n-d}(x_1,\dots,x_n)$, where $e_i$ is the i:th elementary symmetric polynomial, finishes the proof
\end{proof}

\section{Generators of $G$}
A related interesting question is to determine the generators of $G =  (x_1^{d_1},\dots,x_n^{d_n})\colon (l^{d_{n+1}})$. Although we, by Theorem \ref{thm: sum of betti}, understand the Betti numbers of these ideals when $d_n=2$ we have not been able to describe the generators completely. For the case $  (x_1^{2},\dots,x_n^{2})\colon (l^{d})$, which by Corollary \ref{cor: elementary-symmetric} is the annihilator ideal of  some elementary symmetric polynomial, we are however able to generalize the result of \cite{diethorn2025studyquadraticcompleteintersection}.
A related result, given in \cite{Waringrank-Symmetric}, shows that the annihilator ideal of a complete homogeneous symmetric polynomial satisfies the SLP, and they also determine a set of generators for these ideals.

Using the standard action of the symmetric group $S_n$ on $R=k[x_1,\dots,x_n] $, we use the notation $(f)_{S_n}$ to denote $(\{ \sigma(f) \mid \sigma \in S_n \})$.

\begin{prop}
Let $G = (x_1^{2},\dots,x_n^{2})\colon (\ell^{d}) $ and $J = (x_1^2,\dots,x_n^2)$, with $d \le n$. The generators of $G$, or equivalently of the annihilator ideal of $e_{n-d}(x_1,\dots,x_n)$, are given by
\begin{align*}
    &J + \left((x_1-x_2)\cdots (x_{n-d} - x_{n-d+1})\right)_{S_n} \quad &\text{if $n+d$ is odd,} \\
    &J + ((x_1-x_2)\cdots(x_{n-d-1}-x_{n-d})x_{n-d+1})_{S_n} \quad &\text{if $n+d$ is even.}
\end{align*}
\end{prop}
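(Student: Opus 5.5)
The plan is to set $k=n-d$ and work throughout in $R/J$, where $J=(x_1^2,\dots,x_n^2)$. Modulo $J$, every form is a combination of squarefree monomials, and the statement becomes the following: the squarefree forms annihilating $e_k$ in each degree are spanned by squarefree multiples of the $S_n$-orbit of the proposed generator. Write $H$ for the ideal in the statement. By Lemma \ref{lemma: inverse-gen} and Corollary \ref{cor: elementary-symmetric} we have $G=\text{Ann}(e_k)$ up to a nonzero scalar. The proof then has two parts: showing $H\subseteq G$, and comparing Hilbert functions to get equality.

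\emph{Containment.} I would use the identity
$$(x_a-x_b)\circ e_r=e_{r-1}(\widehat{x_a})-e_{r-1}(\widehat{x_b})=(x_b-x_a)\,e_{r-2}(\widehat{x_a},\widehat{x_b}).$$
Iterating it over disjoint pairs shows that $(x_1-x_2)\cdots(x_{2m-1}-x_{2m})\circ e_k$ equals, up to sign, $\prod_{i=1}^m(x_{2i}-x_{2i-1})$ times $e_{k-2m}$ in the remaining variables. The later derivatives never hit the earlier factors, because the pairs involve disjoint variables.

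If $n+d$ is odd, then $k+1=2m$ and $e_{k-2m}=e_{-1}=0$, so the generator annihilates $e_k$. If $n+d$ is even, then $2m=k$ and the result is $\pm\prod_i(x_{2i}-x_{2i-1})$, which involves only $x_1,\dots,x_k$. Applying $x_{k+1}$ then kills it. Both arguments are $S_n$-equivariant, and $J\subseteq G$ is clear, so $H\subseteq G$.

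\emph{Equality.} By Lemma \ref{lemma: colon ideal lemma}, $\HF_{R/G}(j)=\binom nj$ for $j\le k/2$, and $R/G$ is Gorenstein with socle degree $k$. Hence $\HF_{R/G}(j)=\binom n{k-j}$ for $j\ge k/2$. Since $H\subseteq G$, it suffices to show $\dim_k (H/J)_j\ge \binom nj-\binom n{k-j}$ for each $j>k/2$. To do this I would use the $S_n$-module structure. The squarefree degree-$j$ part of $R/J$ is the permutation module $M^{(n-j,j)}\cong\bigoplus_{i\le \min(j,n-j)}S^{(n-i,i)}$. The Specht module $S^{(n-i,i)}$ is realized inside degree $i$ by the span of the products $(x_{a_1}-x_{b_1})\cdots(x_{a_i}-x_{b_i})$ over disjoint pairs, and inside degree $j$ by these products times squarefree monomials of degree $j-i$ in the remaining variables. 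Contraction by $e_k$ is an equivariant map from degree $j$ to degree $k-j$, and it is surjective for $j\ge k/2$ by the SLP (Theorem \ref{thm: monom-SLP}). So its kernel in degree $j$ is exactly the sum of the components $S^{(n-i,i)}$ with $i>k-j$.

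The key step, and the one I expect to be the main obstacle, is to show that $H_j$ contains all of these components. First I would show that $H$ contains every product of at least $m$ disjoint differences multiplied by disjoint squarefree monomials. Multiplying a generator by $x_c$ and by $x_e$ and subtracting shows that adding further difference factors stays inside $H$.

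The remaining task is to check that in degree $j$ these products realize every $S^{(n-i,i)}$ with $k-j<i\le j$, not only those with $i\ge m$. The components with $k-j<i<m$ must come from different placements of the monomial factor. To handle them I would use the explicit Garnir/polytabloid description to match isotypic components and compare dimensions.

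In the even case the extra factor $x_{k+1}$ plays the role of the monomial part. There I would first show directly that in degree $m+1$ the orbit spans the required kernel, and then propagate to higher degrees by multiplying with variables. If the representation-theoretic count becomes messy, a fallback is to verify the dimension inequality only in the initial degree $j=\lceil (k+1)/2\rceil$. For higher degrees one would then use that $G/J$ is generated in that degree, which would have to be extracted from the Betti numbers in Theorem \ref{thm: sum of betti} (taking $d_n=2$) via Proposition \ref{prop: G-res-koszul}.
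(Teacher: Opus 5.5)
Your proposal is correct in substance. Its fallback has the same skeleton as the paper's proof. The paper also checks only degree $l+1=\lceil (k+1)/2\rceil$, and takes from the Betti number results (Theorem~\ref{thm: sum of betti} for $n+d$ odd, Proposition~\ref{prop: G-res-koszul} directly for $n+d$ even) that $G$ has minimal generators only in degrees $2$ and $l+1$. The two proofs differ in how the key facts are verified.

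\textbf{Containment.} For $H\subseteq G$ the paper multiplies the generator by $\ell^d$ modulo $J$, using $\ell(x_a-x_b)\equiv(\ell-x_a-x_b)(x_a-x_b)$ and a pigeonhole argument. Your contraction identity on $e_{n-d}$ is the dual computation and is cleaner.

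\textbf{Count in degree $l+1$.} The paper exhibits enough leading monomials of the proposed ideal (your $H$) by a pairing argument, and counts them with lattice paths and the reflection principle. In your framework this step is immediate. Let $m$ be the number of difference factors in the generator. At $j=l+1$ the kernel consists only of components with $i\ge m$. In the odd case this is just $S^{(n-m,m)}$, which is literally the span of the orbit of the generator. So the obstacle you worry about is vacuous in that degree.

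\textbf{One imprecision.} The span $V_i^{(j)}$ of products of $i$ disjoint differences times a disjoint squarefree monomial of degree $j-i$ is not a copy of $S^{(n-i,i)}$. It is the sum of the components with index $\ge i$; for example, $V_0^{(j)}$ is everything. The $S^{(n-i,i)}$-component itself is spanned by $\prod_{c\le i}(x_{a_c}-x_{b_c})\,e_{j-i}(\text{remaining variables})$, i.e.\ by $\ell^{j-i}$ times Specht polynomials modulo $J$. You only need that this component lies in $V_i^{(j)}$, so the argument survives. A dimension comparison based on the identification as you stated it would miscount.

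\textbf{The all-degrees route.} Your primary route is unfinished exactly where you say, but no Garnir relations are needed. Use
$$x_a(x_a-x_b)\equiv -x_ax_b\pmod J.$$
Multiply a product of $m$ disjoint differences (times $x_e$ in the even case) by $x_{a_1}\cdots x_{a_r}$ and further disjoint variables. This trades $r$ difference factors for $\pm x_{a_1}x_{b_1}\cdots x_{a_r}x_{b_r}$. Counting the available degree, this together with your observation for $i\ge m$ gives $H_j\supseteq V_i^{(j)}$ whenever $i\ge k+1-j$, in both parity cases. Hence $H_j\supseteq V^{(j)}_{\max(0,k+1-j)}\supseteq (G/J)_j$ for every $j$.

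Completed this way, your approach needs only the Hilbert function of $R/G$ (Lemma~\ref{lemma: colon ideal lemma} and Gorenstein symmetry). It does not need Theorem~\ref{thm: sum of betti} or Proposition~\ref{prop: G-res-koszul}. It also yields, as a corollary, that $G$ is generated in degrees $2$ and $l+1$, which the paper has to import from the Betti number computation.
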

\begin{proof}
First we claim that the only minimal generators of $G$ are in degree $2$ and $l+1$, where $l=\left\lfloor \frac{n-d}{2}\right\rfloor$. If $n+d$ is even then using Proposition \ref{prop: G-res-koszul} we need only to check that there are no $n-1$:th Koszul relations of $J$ in degree $l + n-1$ or lower. But any such relation has degree equal to the sum of some $n-1$ generators of $J$, 
which always has degree $2(n-1)$, and since $l<n-1$, the conclusion follows. If $n+d$ is odd, then by Theorem $\ref{thm: sum of betti}$, $G$ is generated in degree $2$ and $\frac{n-1-d}{2} + 1 = \lfloor\frac{n-d}{2} \rfloor+1 = l+1$.

Let $G'$ denote the claimed generators.
Note that modulo $J$ we have
$$
 (x_1+\dots + x_n)\prod_{i=1}^{\frac{n-d+1}{2}}(x_{2i-1}-x_{2i})
 = (x_{n-d+2} + \dots +x_n)\prod_{i=1}^{\frac{n-d+1}{2}}(x_{2i-1}-x_{2i}).
$$ Hence 
$$
 (x_1+\dots + x_n)^d\prod_{i=1}^{\frac{n-d+1}{2}}(x_{2i-1}-x_{2i})
 = (x_{n-d+2} + \dots +x_n)^d\prod_{i=1}^{\frac{n-d+1}{2}}(x_{2i-1}-x_{2i}),
$$ and since the last factor contains $d-1$ variables it follows by the pigeonhole principle that this is 0 mod $J$. A similar argument shows the inclusion when $n+d$ is even, and using the $S_n$-symmetry of $G$ we find that $G' \subseteq G$.

Let $\textbf{in'}(G') = J+ \{ x_{i_1}\cdots x_{i_{l+1}} \mid i_1<\dots <i_{l+1} , \quad  i_j\le d+2(j-1) \}$. We first show that $\textbf{in'}(G') \subseteq \textbf{in}(G')$, where $\textbf{in}(-)$ denotes the initial ideal with respect to any monomial ordering where $x_1>x_2 > \dots >x_n$.

We begin with the case when $n+d$ is odd. It is sufficient to for each $i_j$ find a unique larger index $c_j$ to pair it with, since the product of $(x_{i_j}-x_{c_j})$ gives an element in $G'$, and the factor $(x_{i_j}-x_{c_j})$ will contribute with the factor $x_{i_j}$ to the initial term. That this is possible follows by a simple induction argument: First, as $x_{i_{l+1}} \le d+2l = n-1$ we may always pair this element with $x_n$. Inductively we may assume that we have already paired the elements $x_{i_{j+1}} \dots, x_{i_{l+1}}$ and want to show that there still exists a index larger than $i_j$ which is available. Since $i_j \le  d+2(j-1)$, we have at least $n - (d+2(j-1)) = (n-d-1) - 2j + 3 = 2(l-j) +3$ possible indices/variables to pair $i_j$ with (using that $2l = n+d-1$, as $n+d$ is odd). The previous $l-j+1$ elements $x_{i_{j+1}},\dots,x_{i_{l+1}}$ only occupy $2(i-j)+2$ of these, so there is at least one left to pair $i_j$ with. 

When $n+d$ is odd we instead have $i_{l+1} \le n$, and we may obtain this factor by simply choosing $x_{i_{l+1}}$ as the last factor of our form in $G'$. By induction we may assume that we have already paired the elements $x_{i_{j+1}}\dots,x_{l}$, along with $x_{i_{l+1}}$. There are at least $n-(d+2(j-1)) = 2(l-j)+2$ possible elements to pair $x_{i_j}$ with, and we have only used $2(l-j)+1$ for the previous elements. It follows that $\textbf{in'}(G') \subseteq \textbf{in}(G')$. 

For every square-free monomial of degree $l+1$ we can bijectively associate a lattice path from $(0,0)$ to $(n,2(l+1)-n)$ with steps $(1,1)$ and $(1,-1)$, by letting the indices of the variables appearing in the monomial $m$ correspond to the $(1,1)$ steps, with the remaining steps being set to $(1,-1)$. Let $w$ be the vector, in $\{\pm 1\}^n$, whose coordinates correspond to the second coordinate of the lattice steps described above, so that  $w_j = 1$ if and only if $x_j \mid m$. Then we claim that the criterion
$$
S_j = \sum_{i=1}^{j}w_i \ge 1-d
$$ for all $j\le n$, is equivalent to the restriction on our set of square-free monomials in $\textbf{in'}(G')$. We note that  $\sum_{i=1}^{i_j}w_i = j - (i_j-j) \ge  2-d$ if and only if $i_j \le d+2(j-1)$, and since the minimum value must be attained exactly before a positive step this is equivalent to saying that $S_j \ge 1-d$ for all $j\le n$. Hence our set of monomials is in bijection with the subset of lattice paths that never pass below the line $y=1-d$. By the reflection principle (see Theorem 10.3.1 in \cite{Krattenthaler}) the number of paths that pass below this line is the same as the number of total paths starting in $(0,-2d)$, ending in $(n,2(l+1)-n)$, which corresponds to exactly $l+1+d$ positive steps, and hence there are exactly $\binom{n}{l+1+d}$ such paths. We conclude that the number of square-free generators of $\textbf{in'}(G')$ equals
$$
\binom{n}{l+1} - \binom{n}{l+1+d}.
$$

Now from the inclusions $\textbf{in'}(G') \subseteq \textbf{in}(G') $ and $G' \subseteq G$ we have the inequalities
$$
\HF_{R/G}(i) \le \HF_{R/G'}(i) = \HF_{R/\textbf{in}(G')}(i) \le  \HF_{R/\textbf{in'}(G')}(i).
$$

We have $\HF_{R/G}(l+1) = \HF_{R/J}(l+1+d) = \binom{n}{l+1+d}$ and also $\HF_{R/\textbf{in'}(G)}= \HF_{R/J}(l+1) - \binom{n}{l+1} + \binom{n}{l+1+d} = \binom{n}{l+1+d}$ and hence the inequality is an equality in degree $l+1$.
Since $G'\subseteq G$ and they are generated in the same degrees and have the same Hilbert function in these degrees, we conclude that $G' = G$.
\end{proof}

We remark that the same type of lattice paths which we make use of in this proof also appear in \cite{exterior-quadratic}, in the context of ideals in the exterior algebra, which similarly to our case is square-free.

\section*{Acknowledgements}
I want to thank my advisor Samuel Lundqvist for his guidance, many helpful discussions, and for much help with the exposition of the paper. I also want to thank Luís Duarte for providing valuable comments on an earlier draft of this paper. Lastly, computer calculations done in Macaulay2 \cite{M2} have been crucial for understanding the objects treated in this work.
The author is supported by the Swedish Research Council VR2022-04009.

\bibliography{refs}
\bibliographystyle{alphaurl}

\end{document}